\renewcommand{\v}{\textup{\textsf{v}}}
\theoremstyle{plain}
\newtheorem{thm}{Theorem}[section]
\newtheorem{lem}[thm]{Lemma}
\newtheorem{cor}[thm]{Corollary}
\newtheorem{rem}[thm]{Remark}
\newtheorem{conj}[thm]{Conjecture}
\noindent \emph{Proof.} {}{#1}{}}{\hfill
\theoremstyle{plain} 
\newcommand{\thistheoremname}{}
\newtheorem{genericthm}[section]{\thistheoremname}
\theoremstyle{definition}
\def\less{\setminus}
\newcounter{counter}
\def\ksix{\mathcal{K}_6^{-4}}
\def\kst{\mathcal{K}_7^{-3}}
\def\ket{\mathcal{K}_8^{-3}}
\def\kef{\mathcal{K}_8^{-4}}
\def\se{\succcurlyeq}
\def\dfn#1{{\sl #1}}
\title{Every graph with no $\kef$ minor is $7$-colorable}
\author{Michael Lafferty\thanks{Department  of Mathematics, University of Central Florida, Orlando, FL 32816, USA. Supported   by NSF grant DMS-2153945. Email: michaelmlafferty@Knights.ucf.edu.}\hskip 1cm Zi-Xia Song\thanks{Department  of Mathematics, University of Central Florida, Orlando, FL 32816, USA. Supported by  NSF grant    DMS-2153945. Email: Zixia.Song@ucf.edu.}}
\date{August 22, 2022}
\begin{document}
\maketitle
\begin{center}
	\emph{Dedicated to the memory of Robin Thomas on his 60th birthday}
\end{center}

\begin{abstract}
 
 Hadwiger's Conjecture from 1943 states that every graph with no $K_{t}$ minor is $(t-1)$-colorable;   it remains wide open for all $t\ge 7$.  For positive integers $t$ and $s$, let $\mathcal{K}_t^{-s}$ denote the family of graphs obtained from the complete graph $K_t$ by removing $s$ edges. We say that a graph $G$ has no  $\mathcal{K}_t^{-s}$ minor  if it has no $H$ minor for every $H\in  \mathcal{K}_t^{-s}$. Jakobsen in 1971 proved that every   graph  with no   $\mathcal{K}_7^{-2}$   minor is $6$-colorable.   In this paper we consider the next step and  prove that every graph with no $\kef$ minor  is $7$-colorable.  Our result implies that $H$-Hadwiger's Conjecture,   suggested by Paul Seymour in 2017,  is true for every graph $H$ on eight vertices such that  the complement of $H$ has maximum degree at least four,   a perfect matching, a triangle  and a cycle of length   four.  Our proof utilizes  an extremal function for $\kef$ minors obtained in this paper,   generalized Kempe chains of contraction-critical graphs    by Rolek and the second author, and 
 the method for finding     $K_7$ minors  from three different   $K_5$ subgraphs by Kawarabayashi and Toft;  this  method  was first   developed  by Robertson, Seymour and Thomas in 1993 to prove Hadwiger's Conjecture for $t=6$. 
\end{abstract}

\baselineskip 16pt

\section{Introduction}

All graphs in this paper are finite and simple.   For a graph $G$ we use $|G|$, $e(G)$, $\delta (G)$, $\Delta(G)$, $\alpha(G)$, $\chi(G)$ to denote the number
of vertices, number of edges,   minimum degree, maximum degree,  independence number, and chromatic number  of $G$, respectively.  The \dfn{complement} of $G$ is denoted by $\overline{G}$.  For any positive integer k, we write  $[k]$ for the set $\{1, \ldots, k\}$. A graph  $H$ is a \dfn{minor} of a graph $G$ if  $H$ can be
 obtained from a subgraph of $G$ by contracting edges.  We write $G\se H$ if 
$H$ is a minor of $G$.
In those circumstances we also say that  $G$ has an  \dfn{$H$ minor}.  For positive integers  $t, s$, we use $\mathcal{K}_t^{-s}$ to denote  the family of graphs obtained from the complete graph $K_t$ by deleting $s$ edges. When $s\le 2$, we use $K_t^-$ to denote the unique graph  in $\mathcal{K}_t^{-1}$; and $K_t^=$ and  $K_t^<$ to denote the graphs obtained from $K_t$ by deleting two independent edges and two adjacent edges, respectively. Note that $\mathcal{K}_t^{-2}=\{K_t^=, K_t^<\}$. 
We say that a graph $G$ has \dfn{no  $\mathcal{K}_t^{-s}$ minor}  if it has no $H$ minor for every $H\in  \mathcal{K}_t^{-s}$; and $G$ has a  $\mathcal{K}_t^{-s}$ minor, otherwise. We write $G\se \mathcal{K}_t^{-s}$ if 
$G$ has a  $\mathcal{K}_t^{-s}$ minor.\medskip

Our work is motivated by   Hadwiger's Conjecture~\cite{Had43}, which is perhaps the most famous conjecture in graph theory.

\begin{conj}[Hadwiger's Conjecture~\cite{Had43}]\label{HC} Every graph with no $K_t$ minor is $(t-1)$-colorable. 
\end{conj}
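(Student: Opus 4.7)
The plan would be by strong induction on $t$, with the cases $t\le 6$ already settled (Wagner for $t=5$, Robertson--Seymour--Thomas for $t=6$). Suppose the conjecture fails for some $t\ge 7$ and let $G$ be a minimum counterexample: then $G$ is contraction-critical, $\chi(G)\ge t$, and $G\not\se K_t$. Standard Dirac-type reductions force $\delta(G)\ge t-1$ and $(t-1)$-connectivity, so one would begin the analysis at a vertex $v$ of minimum degree and study $H:=G[N(v)]$.

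Next, I would try to push the strategy of the present paper from $\kef$-minor-free graphs up to $K_t$-minor-free graphs. Concretely, one invokes the Kostochka--Thomason extremal function to conclude that $G$, and in particular $G-v$, is quite dense; extracts several overlapping $K_5$-like subgraphs inside and near $N(v)$, mimicking the Robertson--Seymour--Thomas and Kawarabayashi--Toft ``three-$K_5$'' technique featured in the abstract; and applies generalized Kempe chains to an optimal $(t-1)$-coloring of $G-v$ in order to reassign colors so that $N(v)$ ends up using at most $t-2$ colors in some coloring, contradicting contraction-criticality. An alternative route is to show that the branch sets produced by contracting inside $N(v)$ can be linked through $G-v$ into a $K_{t-1}$ minor each of whose branch sets meets $N(v)$, which combined with $v$ yields a $K_t$ minor.

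The main obstacle, and the reason Hadwiger's Conjecture has been open for every $t\ge 7$ since 1943, is the gap between the best extremal bound $\Theta(t\sqrt{\log t})$ and the $t-1$ demanded by the statement: pure density cannot, even in principle, force a $K_t$ minor in a $(t-1)$-chromatic graph. Any complete proof must combine density with high connectivity and with color-class structure in a genuinely new way, and no such combination is known even for $t=7$. This is exactly why the present paper attacks only the weaker question of forbidding the eight-vertex family $\kef$ rather than $K_8$ itself; I would not expect the plan above to close out for general $t$ without substantial new ideas, and would instead pursue it one extremal family at a time, as the authors do here.
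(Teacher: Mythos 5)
You correctly recognize that \cref{HC} is a conjecture, not a theorem, and that the paper (like every other paper) offers no proof of it; it is stated only as motivation, with the authors proving the strictly weaker \cref{t:main} instead. Your sketch is an honest description of why the standard contraction-critical plus density plus Kempe-chain machinery cannot close the general case, and your diagnosis of the obstruction (the $\Theta(t\sqrt{\log t})$ gap, now $O(t\log\log t)$, between the extremal function and $t-1$) matches the state of the art described in the introduction; there is nothing to compare against since neither you nor the paper proves the statement.
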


\cref{HC} is  trivially true for $t\le3$, and reasonably easy for $t=4$, as shown independently by Hadwiger~\cite{Had43} and Dirac~\cite{Dirac52}. However, for $t\ge5$, Hadwiger's Conjecture implies the Four Color Theorem~\cite{AH77,AHK77}.   Wagner~\cite{Wagner37} proved that the case $t=5$ of Hadwiger's Conjecture is, in fact, equivalent to the Four Color Theorem, and the same was shown for $t=6$ by Robertson, Seymour and  Thomas~\cite{RST}. Despite receiving considerable attention over the years, Hadwiger's Conjecture remains wide open for all $t\ge 7$,  and is    considered among the most important problems in graph theory and has motivated numerous developments in graph coloring and graph minor theory.  Proving that graphs with no $K_7$ minor are $6$-colorable is thus the first case of Hadwiger's  Conjecture that is still open.
It  is not even known yet whether  every graph with no $K_7$ minor is $7$-colorable; Rolek, Thomas and the second author~\cite{RST22} proved that every $8$-contraction-critical graph with no $K_7$ minor has at most one vertex of degree eight. Jakobsen~\cite{Jakobsen71b}  in 1971 proved that every graph with no $K_7$   minor is $9$-colorable. 
  Kawarabayashi and Toft~\cite{KT05} in 2005 proved that every graph with no $K_7$ or $K_{4,\, 4}$ minor is $6$-colorable.   Recently, Albar and Gon\c calves~\cite{AG18}  proved that 
every graph with no $K_7$ minor is $8$-colorable, and every graph with no $K_8$ minor is $10$-colorable. Their proofs are computer-assisted;      Rolek and the second author~\cite{RolekSong17a} then gave  computer-free proofs of  their results, and further  proved that  every graph with no $K_9$ minor is $12$-colorable, and every graph with no $K_t$ minor is $(2t-6)$-colorable for all $t\ge10$ if  Conjecture~5.1 in \cite{RolekSong17a} holds. \medskip

Until very recently  the best known upper bound on the chromatic number of graphs with no $K_t$ minor  is $O(t (\log t)^{1/2})$,  obtained independently by Kostochka~\cite{Kostochka82,Kostochka84} and Thomason~\cite{Thomason84}, while  Norin, Postle and the second  author~\cite{NPS20} improved    the frightening $(\log t)^{1/2}$ term to $(\log t)^{1/4}$. The current record 
 is $O(t\log \log t)$ due to
Delcourt and Postle~\cite{DelcourtPostle}.   K\"{u}hn  and Osthus~\cite{KuhOst03c} proved that Hadwiger's Conjecture is true for $C_4$-free graphs of sufficiently large chromatic number,  and for all graphs of girth at least $19$.  We refer the reader to   recent surveys~\cite{CV2020, K2015,Seymoursurvey} for further  background on Hadwiger's Conjecture. \medskip

Given the difficulty of Hadwiger's Conjecture, Paul Seymour in 2017 suggested the study of the following $H$-Hadwiger's Conjecture. 

\begin{conj}[$H$-Hadwiger's Conjecture]\label{HHC} For every graph $H$ on $t$ vertices, every graph with no $H$ minor is $(t-1)$-colorable. 
\end{conj}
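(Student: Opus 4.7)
The plan starts by acknowledging that Conjecture~\ref{HHC} is strictly stronger than Hadwiger's Conjecture: taking $H = K_t$ recovers \cref{HC} verbatim, so since \cref{HC} is wide open for $t \ge 7$, no complete proof of \cref{HHC} is currently in reach. What I would propose instead is a two-tier program: a clean reduction handling every $t$ for which Hadwiger's Conjecture is known, and a structural attack on the remaining sparse-$H$ regime for each open $t$.

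For $t \le 6$, Conjecture~\ref{HHC} is in fact a corollary of what is already proved. Every graph $H$ on $t$ vertices is a subgraph of $K_t$, so any $H$-minor-free graph is automatically $K_t$-minor-free, and Hadwiger's Conjecture --- settled for $t \le 4$ by Hadwiger and Dirac, for $t = 5$ by Wagner, and for $t = 6$ by Robertson, Seymour and Thomas --- supplies the $(t-1)$-coloring. The first genuinely open case is $t = 7$, and within each open $t$ the conjecture admits a natural gradation indexed by $e(K_t) - e(H)$: the more edges missing from $H$, the weaker the minor-freeness hypothesis becomes, but also the more adjacency structure is available in $\overline{H}$ to exploit.

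For this sparse-$H$ regime, I would follow the blueprint used in the present paper for $H \in \kef$, which in turn generalizes Jakobsen's attack on $\mathcal{K}_7^{-2}$. First, derive an extremal function showing that every $H$-minor-free graph on $n$ vertices has at most $c(H)\,n$ edges, with $c(H)$ small enough that a minimum counterexample has small minimum degree. Second, at a low-degree vertex of a contraction-critical graph, use generalized Kempe-chain arguments of Rolek and Song to force a rich neighborhood structure. Third, apply the Kawarabayashi-Toft technique of assembling the desired minor from three overlapping almost-cliques. Each step must be re-calibrated to the specific $H$: the extremal function depends on $e(H)$, and the Kempe and assembly steps exploit the adjacency structure of $\overline{H}$ --- a matching, a triangle, a $C_4$, a vertex of high degree --- to upgrade an almost-$K_t$ minor into a genuine $H$ minor.

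The main obstacle, and the reason this is a program rather than a proof, is that the template degrades as $H$ approaches $K_t$. When $\overline{H}$ has few edges, $c(H)$ is forced up toward the Hadwiger-Mader threshold, the extremal step no longer bounds $\delta(G)$ usefully, and the final assembly step can no longer afford to ignore any missing adjacency of $\overline{H}$; at the extreme $H = K_t$ the plan collapses back into Hadwiger's Conjecture itself. The realistic ambition is therefore not a proof of \cref{HHC} as stated, but a classification --- for each $t$ where Hadwiger is open --- of those $H$ on $t$ vertices for which the extremal/Kempe/assembly template succeeds. The present paper's corollary for eight-vertex $H$ whose complement carries a degree-four vertex, a perfect matching, a triangle and a $C_4$ is one such data point, and a natural next target would be to push Jakobsen's $t = 7$ theorem from $\mathcal{K}_7^{-2}$ to $\mathcal{K}_7^{-3}$, and then to identify the $H$ on nine or ten vertices for which the same three ingredients still fit together.
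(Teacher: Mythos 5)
You have correctly identified that this is an open conjecture, not a theorem; the paper states it as Seymour's suggestion and does not prove it, so there is no proof to compare against. Your framing as a research program rather than a proof is the only honest stance, and your outline --- reduce $t \le 6$ to the known cases of Hadwiger, then attack sparse-$H$ cases via extremal function, generalized Kempe chains, and the Kawarabayashi--Toft assembly --- accurately describes what the paper actually does for its main theorem (\cref{t:main}) and corollary (\cref{c:K84}).

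Two logical slips are worth correcting. First, Conjecture~\ref{HHC} is not strictly stronger than Hadwiger's Conjecture; the two are \emph{equivalent}. Since every graph $H$ on $t$ vertices is a spanning subgraph of $K_t$, a $K_t$ minor in $G$ immediately yields an $H$ minor by deleting edges inside the branch sets; hence a graph with no $H$ minor has no $K_t$ minor, and Hadwiger's Conjecture for $t$ already gives the $(t-1)$-coloring. Conversely the case $H=K_t$ of \cref{HHC} is \cref{HC}. The interest of the $H$-Hadwiger viewpoint lies entirely in proving \emph{individual} instances for a fixed proper subgraph $H \subsetneq K_t$, each of which is a genuinely \emph{weaker} statement and therefore potentially tractable while \cref{HC} itself is not. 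Second, you have the monotonicity backwards: deleting more edges from $K_t$ makes $H$ sparser, and a sparser $H$ is \emph{easier} to find as a minor, so ``$G$ has no $H$ minor'' is a \emph{stronger} hypothesis, not a weaker one. That stronger hypothesis is exactly what buys a lower extremal threshold $c(H)$ and, via $\overline{H}$, more room for the Kempe-chain and three-clique assembly steps; your conclusion (the template degrades as $H$ approaches $K_t$) is right, but for the opposite reason you give.
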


It is worth noting that Wagner~\cite{Wagner60}  in 1960 initiated  the study of this type of problem   and proved that every graph with no $K_5^-$ minor is $4$-colorable;   Dirac~\cite{Dirac64a}  in 1964    proved that every graph with no $K_6^-$ minor is $5$-colorable;  and Jakobsen~\cite{Jakobsen71b} in 1971 proved that every graph with no $K_7^{-}$ minor is  $7$-colorable. Thus proving graphs with no $K_7^-$ minor is  $6$-colorable remains open.  Rolek and the second author~\cite{RolekSong17a} in 2017 proved that every graph with no $K_8^{-}$ minor is  $9$-colorable.  Very recently, Norin and Seymour~\cite{NorSey22} proved that every graph on $n$ vertices with independence number two has an $H$ minor, where $H$  is a graph with  $\lceil n/2\rceil$ vertices and at least $  0.98688\cdot {{|H|}\choose2}-o(n^2)$ edges. \medskip

Woodall~\cite{Woo01} studied a special case of $H$-Hadwiger's Conjecture in 2001 by excluding the complete bipartite graph $K_{s,t}$ minor and made the following  conjecture. 

 \begin{conj}[Woodall~\cite{Woo01}]\label{WC}  Every graph with no $K_{s,t}$ minor is $(s+t-1)$-colorable. 
\end{conj}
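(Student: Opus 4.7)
The plan is to follow the standard contraction-critical framework used throughout this area. Suppose, for contradiction, that $G$ is a vertex-minimum counterexample to \cref{WC} for a fixed pair $(s,t)$; then $G$ has no $K_{s,t}$ minor, $\chi(G)\ge s+t$, and $G$ is $(s+t)$-contraction-critical, so in particular $\delta(G)\ge s+t-1$. One would then combine this lower bound with an extremal-function upper bound on $e(G)$ for $K_{s,t}$-minor-free graphs: Myers--Thomason and Kostochka--Prince-type estimates give $e(G)\le c\cdot t^{1/s}\cdot |G|$ when $t$ is large relative to $s$. The combination tightly constrains $\delta(G)$ and, for small $s$, forces $G$ to be nearly $(s+t-1)$-regular.

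The next step is a local analysis around a vertex $v$ of smallest degree. By the Kempe-chain/generalized-Kempe-chain machinery of \cite{RST22,RolekSong17a} (originating in Robertson--Seymour--Thomas for $t=6$), any two color classes of a proper $(s+t-1)$-coloring of $G-v$ meeting $N(v)$ must be connected in $G[N(v)\cup\{v\}]$, which yields substantial connectivity inside $G[N(v)]$. The goal is then to exhibit $s$ vertices in $N(v)\cup\{v\}$ (the ``$s$-side'' of $K_{s,t}$) together with $t$ pairwise disjoint connected subgraphs each attached to all $s$ of them, forming the required minor. When $s\ge 3$, the three-$K_5$ patching idea of Kawarabayashi--Toft looks useful for assembling the larger $t$-side out of smaller complete-minor pieces, in the same spirit as the present paper's $\kef$ analysis.

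The main obstacle is the coloring--density gap. The extremal-function route alone yields at best $\chi(G)=O(t^{1/s}\log^{O(1)} t)$ for $s\ge 2$, and $O(t\log\log t)$ in the $K_t$-minor regime via \cite{DelcourtPostle}; neither bound is close to the linear target $s+t-1$. Bridging this gap in general seems to require a structural description of $(s+t)$-contraction-critical $K_{s,t}$-minor-free graphs, in the spirit of the $t=6$ theorem of Robertson--Seymour--Thomas \cite{RST}, and producing such a description is precisely the hard open part. Accordingly, a realistic short-term plan is to verify \cref{WC} for small fixed $s$ (say $s\le 3$) and all $t$, or for small $s+t$, using arguments of the same flavor as the $\kef$-minor proof pursued in this paper; a complete proof of \cref{WC} in full generality appears out of reach of current methods.
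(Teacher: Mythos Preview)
The statement labeled \cref{WC} is Woodall's \emph{conjecture}; the paper states it as an open problem and does not prove it. What the paper does record is the known partial progress: Woodall's own verification for $s\in\{1,2\}$, the $s=3$ case for large $t$ via the Kostochka--Prince extremal function, and Kostochka's result for $t>C(s\log s)^3$. There is no proof in the paper to compare your proposal against.

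Your write-up is not a proof either, and you say as much in the final paragraph. The outline you give---contraction-criticality plus extremal functions plus Kempe-chain local analysis---is a reasonable heuristic description of the toolkit, but it does not close the gap you yourself identify: the known edge bounds for $K_{s,t}$-minor-free graphs only force $\delta(G)$ to be roughly $(1+o(1))t$ when $t\gg s$, which is far from enough to force a $K_{s,t}$ minor directly, and the Kempe-chain and three-clique patching arguments you invoke are tailored to excluding \emph{dense} minors (near-complete graphs), not bipartite ones. So the proposal is an honest survey of obstacles rather than a proof strategy with a clear path to completion. Since the paper treats \cref{WC} purely as background motivation, no proof is expected here.
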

 
  In the same paper Woodall confirmed   Conjecture~\ref{WC}  for $s\in\{1,2\}$ without using the extremal function for $K_{s,t}$ minors.  The $s=2$ case also follows from the extremal function for $K_{2,t}$ minors by Myers~\cite{Myers03},  and Chudnovsky, Reed and Seymour~\cite{CRS11};   the $s=3$ case when $t\ge6300$  follows from the extremal function for $K_{3,t}$ minors by  Kostochka and Prince~\cite{KosPri10}. Asymptomatic bounds for the chromatic number of graphs with no   $K_{s,t}$ minor,  when $t$ is sufficiently larger than $s$, follow from the extremal functions for $K_{s,t}$ minors  by K\"uhn and Osthus~\cite{KuhOst05a},  and Kostochka and Prince~\cite{KosPri08}; in particular,  the extremal function by Kostochka and Prince  implies that every graph with no $K_{s,t}$ minor is $(3s+t-1)$-colorable when $t$ is sufficiently larger than $s$.    Years later Kostochka~\cite{Kos14}  proved that   Conjecture~\ref{WC} is true for $t>C(s\log s)^3$.\medskip

 Dirac  in 1964 began the study of a variation of $H$-Hadwiger's Conjecture in~\cite{Dirac64b} by  excluding  more than one forbidden minor simultaneously; he proved that every graph with no $\mathcal{K}_t^{-2}$ minor is  $(t-1)$-colorable for each $t\in\{5,6\}$. 
Jakobsen~\cite{Jakobsen71a} in 1971 proved that every graph with no $\mathcal{K}_7^{-2}$ minor is  $6$-colorable;   this implies that $H$-Hadwiger's Conjecture  is true  for all graphs $H$ on seven vertices such that $\Delta(\overline{H})\ge2$ and $\overline{H}$ has a matching of size two.   Recently, Rolek and the second author~\cite{RolekSong17a} proved that every graph with no $\mathcal{K}_8^{-2}$ minor is  $8$-colorable, and  Rolek~\cite{Rolek20} later proved that every graph with no $\mathcal{K}_9^{-2}$ minor is  $10$-colorable.  Proving that every graph with no $\mathcal{K}_8^{-2}$ minor is  $7$-colorable is still open.   We state the result of Jakobsen~\cite{Jakobsen71a} below.

\begin{thm}[Jakobsen~\cite{Jakobsen71a}]\label{t:K72} Every graph with no $\mathcal{K}_7^{-2}$ minor is  $6$-colorable.  In particular,   $H$-Hadwiger's Conjecture  is true for all graphs $H$ on seven vertices such that $\Delta(\overline{H})\ge2$ and $\overline{H}$ has a matching of size two.
\end{thm}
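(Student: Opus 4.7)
The plan is a minimum-counterexample argument in the spirit of the classical proofs of Hadwiger's Conjecture for $t \in \{5,6\}$. Suppose the theorem fails and let $G$ be a $\mathcal{K}_7^{-2}$-minor-free graph with $\chi(G) \ge 7$, chosen with $|G|$ minimum. Then $G$ is $7$-contraction-critical: every proper minor of $G$ is $6$-colorable. The standard critical-graph bound gives $\delta(G) \ge 6$, and Mader's theorem on contraction-critical graphs lets us further assume $G$ is $7$-connected.

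The second ingredient is an extremal function for $\mathcal{K}_7^{-2}$-minor-free graphs: there is a constant $c \le 4$ (probably $c = 4$) such that every $\mathcal{K}_7^{-2}$-minor-free graph on $n$ vertices has at most $cn - O(1)$ edges. This is established by a Mader-style induction, either deleting a vertex of small degree or contracting a carefully chosen edge inside a dense, highly connected piece so that $\mathcal{K}_7^{-2}$-minor-freeness is preserved. Combined with $\delta(G) \ge 6$, this yields, near-tightness of the bound, and in particular the existence of a vertex $v$ of degree exactly $6$; write $N = N(v)$.

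The combinatorial core is to exhibit the forbidden minor near $v$. Since $|N \cup \{v\}| = 7$ and $v$ is adjacent to all of $N$, we must have $e(G[N]) \le 12$ (else $G[N \cup \{v\}]$ already contains $K_7^=$ or $K_7^<$ as a subgraph); hence $\overline{G[N]}$ has at least three edges. Using $7$-connectivity of $G$ and the contraction-critical property (for every $u \in N$ the graph $G/vu$ is $6$-colorable, yielding Kempe-chain information in $G - v$), one routes vertex-disjoint connected subgraphs through $G \setminus N[v]$ that amalgamate pairs of non-adjacent vertices of $N$ into single branch sets, producing a $\mathcal{K}_7^{-2}$ minor with $v$ and the resulting branch sets as its $7$ nodes. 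This gives the desired contradiction.

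The main obstacle is the routing step: when $\overline{G[N]}$ has more than two edges, one must show that \emph{some} pair of non-adjacent vertices of $N$ can be amalgamated without destroying others, all while keeping each branch set adjacent to $v$. This requires a delicate case analysis on the isomorphism type of $\overline{G[N]}$ (paths, triangles, matchings, and stars on $\le 6$ vertices), combined with Kempe chains to supply adjacency when direct paths are unavailable, and using the extremal bound above applied to $G \setminus N[v]$ to guarantee enough ambient structure. The ``in particular'' clause is then immediate: if $H$ is a $7$-vertex graph with $\Delta(\overline{H}) \ge 2$ then $\overline{H} \supseteq P_3$, so $H$ is a spanning subgraph of $K_7^<$; if in addition $\overline{H}$ has a matching of size $2$ then $\overline{H} \supseteq 2K_2$, so $H$ is also a spanning subgraph of $K_7^=$. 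Hence $H$ is a minor of every graph in $\mathcal{K}_7^{-2}$, so every $\mathcal{K}_7^{-2}$-minor-free graph is also $H$-minor-free, and the main theorem delivers $\chi \le 6 = |H| - 1$.
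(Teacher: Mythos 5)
The paper does not actually prove this theorem: it is cited verbatim from Jakobsen's 1971 paper \cite{Jakobsen71a} and used as background, so there is no in-paper proof to compare against. Evaluating your sketch on its own terms, though, it has a concrete error that undermines the whole structure of the argument. You set up a minimum counterexample $G$ that is $7$-contraction-critical with no $\mathcal{K}_7^{-2}$ minor, and you then claim the extremal function together with $\delta(G)\ge 6$ yields a vertex $v$ of degree exactly $6$. But in a $7$-contraction-critical graph, a degree-$6$ vertex cannot exist in this setting. Dirac's lemma (Lemma~\ref{l:alpha2} in this paper) gives $\alpha(G[N(v)]) \le d(v) - k + 2 = 6 - 7 + 2 = 1$, so $N(v)$ would be a $6$-clique and $G[N[v]] = K_7$, which contains both $K_7^{=}$ and $K_7^{<}$ as subgraphs — an immediate $\mathcal{K}_7^{-2}$ minor. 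So in fact $\delta(G)\ge 7$, and your entire analysis of the case $|N\cup\{v\}|=7$ with ``$\overline{G[N]}$ has at least three edges'' describes a situation that never arises.

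The correct starting point is a degree-$7$ vertex (which an extremal bound of roughly $4n - O(1)$ edges does force once $\delta\ge 7$), at which point $\alpha(G[N(v)])\le 2$ with $|N(v)|=7$ and the combinatorics of $\overline{G[N(v)]}$ (a triangle-free graph on $7$ vertices) are quite different from what you wrote. You also acknowledge that the central routing step — amalgamating non-adjacent neighbors of $v$ into branch sets via Kempe chains and connectivity — is left as a ``delicate case analysis,'' so the sketch is incomplete even granting the setup. Your treatment of the ``in particular'' clause is fine: $\Delta(\overline H)\ge 2$ means $\overline H$ has two edges sharing a vertex so $H\subseteq K_7^{<}$, and a matching of size two in $\overline H$ gives $H\subseteq K_7^{=}$; hence $H$ is a subgraph (so a minor) of both members of $\mathcal{K}_7^{-2}$, and every $H$-minor-free graph is $\mathcal{K}_7^{-2}$-minor-free.
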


The purpose of this paper is to  consider the next step and prove the following main result.

\begin{restatable}{thm}{main}\label{t:main} Every graph with no $\kef$ minor is $7$-colorable.     \end{restatable}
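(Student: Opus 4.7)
The plan is to argue by contradiction: let $G$ be a vertex-minimum counterexample, so $\chi(G)\ge 8$ and $G\not\se\kef$. Minimality forces $G$ to be $8$-contraction-critical, which yields the standard consequences that $\delta(G)\ge 7$, $G$ is $7$-connected, and for every edge $xy$ of $G$ the graph $G/xy$ is properly $7$-colourable. I would first record these properties and fix a vertex $v_0$ of minimum degree; if $d(v_0)=7$ then $G[N(v_0)]$ is already highly constrained, so I would treat the cases $d(v_0)=7$ and $d(v_0)\ge 8$ separately.

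The second ingredient is the extremal function for $\kef$ minors promised in the abstract: once the average degree of a graph exceeds the appropriate threshold, a $\kef$ minor must appear. Combined with criticality, this forces $G$ to have dense local structure at every low-degree vertex. To pin down how close $G[N(v_0)]$ is to a $K_7$, I would apply the generalised Kempe chain framework of Rolek and the second author: each non-edge of $G[N(v_0)]$, together with a colour class in a $7$-colouring of $G-v_0$ that meets $N(v_0)$ in a single vertex, can be rerouted to produce a proper $7$-colouring of $G$, a contradiction. This should reduce us to the situation that $G[N(v_0)]$ is very close to $K_7$ (or has its few non-edges in a very restricted pattern), and in particular contains several $K_5$ subgraphs.

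From there I would run the three-$K_5$ argument of Robertson, Seymour and Thomas, as refined by Kawarabayashi and Toft, in order to assemble a $\kef$ minor. Starting from three suitably overlapping $K_5$ subgraphs $A,B,C$ inside $N[v_0]$ (or reaching into $G-N[v_0]$ through the $7$-connectivity of $G$), I would contract each $K_5$ to a single vertex, attach branch sets in the outside using the new extremal function to guarantee enough edges, and finish with eight connected branch sets whose quotient graph misses at most four pairs. The slack of four missing edges afforded by $\kef$, as opposed to the tighter two of $\mathcal{K}_8^{-2}$, is exactly what allows each $K_5$-to-$K_5$ interaction to afford one "lost" edge to the target and is the reason our result falls short of settling the $\mathcal{K}_8^{-2}$ case.

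The main obstacle is precisely this last step: certifying that the three $K_5$'s can always be chosen and expanded through $G-N[v_0]$ so that the $\binom{8}{2}-4=24$ required cross-edges are all present. This will demand a case analysis driven by the distribution of the few non-edges of $G[N(v_0)]$, by the $7$-connectivity of $G$, and by the newly established extremal function applied to the portion of $G$ outside $N[v_0]$ to rule out small separations and low-edge pieces. I expect this bookkeeping, rather than any single lemma in isolation, to be the most technically delicate part of the proof.
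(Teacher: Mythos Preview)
Your outline hits the right high-level ingredients---contraction-criticality, $7$-connectivity, the extremal function, the Rolek--Song Kempe-chain lemma, and the Kawarabayashi--Toft three-$K_5$ machinery---but two of the load-bearing steps are misstated or missing. First, your description of the Kempe-chain lemma is backwards: it is not used to ``reroute to produce a proper $7$-colouring of $G$'' and thereby derive a contradiction. Rather, given an $8$-vertex $x$ with $\alpha(G[N(x)])=2$ and an independent set $S\subseteq N(x)$ of size~$2$, the lemma guarantees paths through $G\setminus N[x]$ between designated non-adjacent pairs in $N(x)\setminus S$; contracting these paths \emph{adds} the missing edges and is precisely how one manufactures a $\kef$ minor. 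The paper sharpens this via a rooted-$K_4$ variant (using Mader's theorem) to handle a $4$-cycle of missing edges simultaneously, and this refinement is essential.

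Second, you never isolate the key structural fact that drives the three-$K_5$ step: for every $8$-vertex $x$, the graph $G[N(x)]$ contains two \emph{disjoint} $4$-cliques. This is far from automatic---the paper first rules out $\delta(G)=7$ via Dirac's lemma (which forces $\alpha(G[N(x)])\le1$, giving $K_8$), then shows $G$ is $K_6$-free, then uses the Kempe-chain/rooted-$K_4$ machinery in a careful case analysis to force the two disjoint $4$-cliques. These give $L_1=A\cup\{x\}$ and $L_2=B\cup\{x\}$ with $L_1\cap L_2=\{x\}$; the third $K_5$ comes from a \emph{second} $8$-vertex $y\notin N[x]$, whose existence is secured by the extremal function (it forces at least $25$ vertices of degree~$8$). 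Your plan to ``contract each $K_5$ to a single vertex'' is not what happens: one invokes the seven good paths of Kawarabayashi--Toft, and then a further case analysis on $|L_1\cap L_3|$ and $|L_2\cap L_3|$ (with explicit Menger-type path arguments) finishes the job. Without the two-disjoint-$4$-cliques lemma and the correct constructive reading of the Kempe-chain tool, the argument cannot close.
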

 
There are 11 ways to delete four edges from $K_8$, including  four edges that form a perfect matching and four edges that are incident with  the same vertex. Let $H$ be a graph  on eight vertices such that $\Delta(\overline{H})\ge4$,  and $\overline{H}$ has a perfect matching, a triangle and a cycle of length four.     It is simple to check that  every graph with no   $H$ minor  has no $\kef$ minor.   Combining this with \cref{t:main} leads to the observation that $H$-Hadwiger's Conjecture   is true for all such graphs $H$. In particular, $H$-Hadwiger's Conjecture is true  for five graphs $H$ obtained from $K_8$ by deleting eight edges, where $\overline{H}$ is  given in Figure~\ref{f:SpecialH}.

\begin{figure}[htb]
\centering
\includegraphics[scale=0.3]{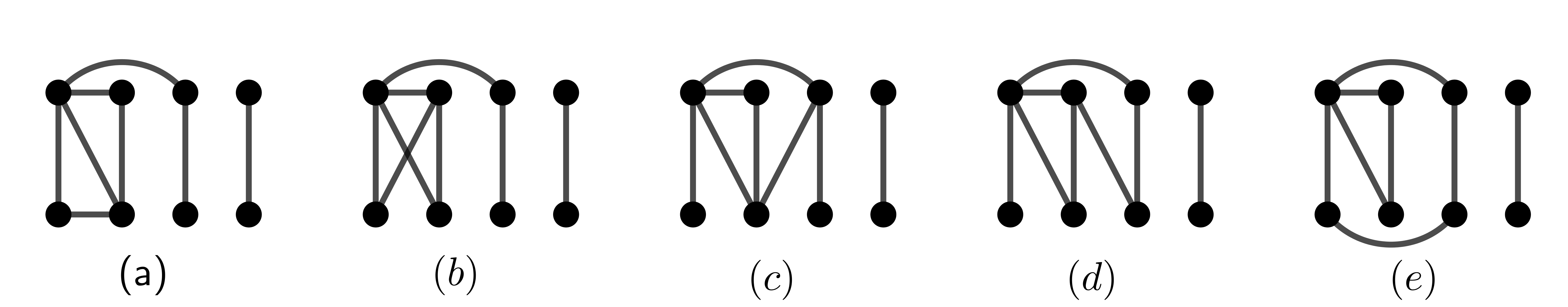}
\caption{Five non-isomorphic $\overline{H}$.}
\label{f:SpecialH}
\end{figure}

\begin{cor}\label{c:K84} $H$-Hadwiger's Conjecture  is true for all graphs $H$ on eight vertices such that $\Delta(\overline{H})\ge4$,  and $\overline{H}$ has a perfect matching,  a triangle and a cycle of length four. 
\end{cor}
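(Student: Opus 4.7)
\medskip

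The plan is a short subgraph-minor reduction to \cref{t:main}. The key observation I would use is that if $H$ is a spanning subgraph of a graph $K$ on the same vertex set, then any graph with no $H$ minor has no $K$ minor (an $H$ minor is obtained from a $K$ minor by deleting edges). Consequently, if $H$ is a spanning subgraph of every $K\in\kef$, then any graph with no $H$ minor has no $\kef$ minor and is $7$-colorable by \cref{t:main}. Taking complements, this reduces the corollary to showing that $\overline{H}$ contains every graph on at most eight vertices with exactly four edges as a subgraph.

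\medskip

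There are eleven such four-edge graphs up to isomorphism: $4K_2$, $P_3+2K_2$, $2P_3$, $K_3+K_2$, $K_{1,3}+K_2$, $P_4+K_2$, $P_5$, $K_{1,4}$, the paw ($K_3$ with a pendant edge), the chair (the $5$-vertex tree with degree sequence $(3,2,1,1,1)$), and $C_4$. The four hypotheses directly supply $K_{1,4}$, $4K_2$, $K_3$, and $C_4$, and I would then check the remaining cases one by one. The key combinatorial tool is that for any subgraph $F\subseteq \overline{H}$ on at most $k$ vertices, at least $4-\lceil k/2\rceil$ edges of the perfect matching $M$ are vertex-disjoint from $F$. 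This immediately handles $K_3+K_2$ (taking $F=T$, the triangle) and the paw (since $|V(T)|=3$ is odd, some triangle vertex is matched by $M$ outside $T$, producing a pendant); it similarly yields $P_3+2K_2$, $2P_3$, $K_{1,3}+K_2$, and $P_4+K_2$ by combining a small piece of $K_{1,4}$ or $C_4$ with enough vertex-disjoint matching edges.

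\medskip

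The hard part will be producing the connected $5$-vertex configurations $P_5$ and the chair, as no single hypothesis supplies them. Here my plan is the following: let $v$ be a vertex of degree at least $4$ with neighbours $u_1,\dots,u_4$, and let $z_i$ denote the $M$-partner of $u_i$. I would split into cases based on whether the $M$-partner of $v$ lies in $\{u_1,\dots,u_4\}$ and on the pairing pattern induced by $M$ on $\{u_1,\dots,u_4\}$. In each case, a path of the form $z_i-u_i-v-u_j-z_j$ (with suitable relabelling to ensure distinctness) realises $P_5$, and extending $K_{1,3}$ at $v$ by the matching partner of one of its leaves realises the chair. Since $\overline{H}$ has only eight vertices, the matching pairs $\{u_1,\dots,u_4\}$ in a small finite number of patterns, making this case analysis, while the most intricate step of the proof, short and elementary; the conclusion of \cref{c:K84} then follows immediately from \cref{t:main}.
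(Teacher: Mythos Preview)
Your overall strategy is exactly the one the paper uses: the paper reduces the corollary to \cref{t:main} via the sentence ``It is simple to check that every graph with no $H$ minor has no $\kef$ minor'' and leaves the verification implicit, while you spell out that this amounts to showing $\overline{H}$ contains each of the eleven four-edge graphs as a subgraph. So there is no divergence in approach.

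There is, however, a genuine error in your ``key combinatorial tool.'' If $F\subseteq\overline{H}$ has $k$ vertices, then each matching edge meeting $F$ uses at least one (not at least two) vertices of $F$, so at most $k$ matching edges meet $F$ and hence at least $4-k$ are vertex-disjoint from $F$. Your claimed bound $4-\lceil k/2\rceil$ runs the inequality the wrong way; it is an \emph{upper} bound on the number of disjoint matching edges, not a lower bound. (For instance, with $k=3$ and each vertex of $F$ matched outside $F$, only one matching edge avoids $F$, not two.) With the correct bound, the paw and $K_3+K_2$ still go through as you say, but $P_3+2K_2$ (needs two disjoint edges off a three-vertex $P_3$), $K_{1,3}+K_2$ and $P_4+K_2$ (need one disjoint edge off a four-vertex piece) no longer follow immediately. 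Your sketch for $2P_3$ is also too optimistic: a second $P_3$ cannot be assembled from matching edges alone. All of these cases are still true, but they require the same kind of short case analysis you outline for $P_5$ and the chair --- choosing \emph{which} $P_3$ or $K_{1,3}$ to take inside the $K_{1,4}$ (or which $P_4$ inside the configuration) so that the needed disjoint piece is forced, typically by a parity or pigeonhole argument on how $M$ pairs $\{v,u_1,\dots,u_4\}$ with the remaining three vertices. Once you correct the lemma and absorb these cases into the case analysis, the plan is sound.
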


 Our proof of \cref{t:main} utilizes  an extremal function for $\kef$ minors (see \cref{t:exfun}),  generalized Kempe chains of contraction-critical graphs obtained  by Rolek and the second author~\cite{RolekSong17a} (see \cref{l:wonderful}), and the method   for finding  $K_7$ minors  from three different   $K_5$ subgraphs by Kawarabayashi and Toft~\cite{KT05} (see \cref{t:goodpaths}).  \medskip
 

\begin{restatable}{thm}{exfun}\label{t:exfun} Every graph on $n\ge 8$ vertices with at least $4.5n-12$ edges has a $\kef$ minor. \end{restatable}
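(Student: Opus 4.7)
The plan is to argue by induction on $n$, taking a counterexample $G$ that minimizes $|G|+e(G)$. The base case $n=8$ is immediate: $e(G)\ge 24=\binom{8}{2}-4$ means $G$ is $K_8$ minus at most $4$ edges, so it contains some $H\in\kef$ as a spanning subgraph and hence $G\se\kef$. So we may assume $n\ge 9$.

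Two reductions extract the main local structure of $G$. First, if some $v$ has $d(v)\le 4$, then $e(G-v)\ge 4.5n-16\ge 4.5(n-1)-12$, so by minimality $G-v\se\kef$, a contradiction; hence $\delta(G)\ge 5$. Second, if some edge $uv$ has $|N(u)\cap N(v)|\le 3$, then $G/uv$ still satisfies the density hypothesis and is smaller, forcing $G/uv\se\kef$ and contradicting $G$; hence every edge of $G$ lies in at least $4$ triangles. These two conditions have sharp consequences at low-degree vertices: for $d(v)=5$, each neighbor $u$ must have all of $N(v)\setminus\{u\}$ as common neighbors, so $N(v)$ spans a $K_5$ and $\{v\}\cup N(v)$ spans a $K_6$ in $G$; for $d(v)=6$, $G[N(v)]$ has minimum degree $\ge 4$, so $\{v\}\cup N(v)$ induces a graph in $\mathcal{K}_7^{-s}$ with $s\le 3$. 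Standard small-cut reductions (replace one side of a small separator by a clique on the separator and apply induction to the other side) should also push the connectivity of $G$ up to $7$ or higher.

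The argument would then split on $\delta(G)$. If $\delta(G)\ge 7$, the density bound $e(G)\ge 4.5n-12$ combined with the large minimum degree allows a Mader-style densification/contraction argument to contradict the lack of a $\kef$ minor. If $\delta(G)=6$, the near-$K_7$ on $\{v\}\cup N(v)$ needs only one additional branch set adjacent to at least $6$ of its $7$ vertices, which is supplied by the density and connectivity. The hardest case is $\delta(G)=5$: having the $K_6$ on $\{v\}\cup N(v)$, one must produce two disjoint connected branch sets $X,Y\subseteq V(G)\setminus N[v]$ whose $13$ potential crossing edges to $N[v]$ plus the one potential edge between them are missing at most $4$ times in total. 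I expect this to be the main obstacle; producing $X$ and $Y$ will require a delicate counting or Menger-type argument that uses both the density bound and the fact that every edge lies in at least $4$ triangles to guarantee enough crossing edges in $G\setminus N[v]$.
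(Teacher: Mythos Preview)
Your opening reductions (minimal counterexample, $\delta(G)\ge 5$, every edge in at least four triangles, $K_6$ around a $5$-vertex, $K_{2,2,2}$ inside the neighbourhood of a $6$-vertex) are correct and match the paper. After that, however, the plan goes astray.

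First, the connectivity claim is too optimistic. The small-cut reductions only yield $5$-connectivity, and even that takes real work (the paper uses a separate lemma of J{\o}rgensen bounding $e(G[S])+d_1\ge 5$ for $4$-separators). You will not get $7$-connectivity this way.

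Second, and more importantly, the case split on $\delta(G)$ mislocates the difficulty. The paper does not treat $\delta\in\{5,6,7\}$ as the main cases to be attacked; it \emph{eliminates} them. Using the $4$-triangle condition and $5$-connectivity, one shows that $G$ has no $6$-vertex and no $7$-vertex at all (the near-$K_7$ around such a vertex, together with one or two contracted components of $G\setminus N[x]$, already produces a $\kef$ minor), and that $G$ has at most one $5$-vertex, which moreover is not adjacent to any $8$-vertex. So your ``hardest case $\delta=5$'' is a red herring: a lone $5$-vertex plays almost no role, and the program of building two branch sets $X,Y$ outside a $K_6$ is not what drives the proof. Since $e(G)=\lceil 4.5n-12\rceil$ forces $\delta(G)\le 8$, the entire argument now lives at $8$-vertices.

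The real content, which is absent from your sketch, is the analysis around an $8$-vertex $x$. The key lemma (proved separately) is that for any graph $H$ on eight vertices with $\delta(H)\ge 4$ there is some $y\in V(H)$ with $H\setminus y\se\ksix$. Applied to $H=G[N(x)]$, this means that if some component $K$ of $G\setminus N[x]$ sees every ``deficient'' vertex of $N(x)$, then contracting $K$ onto $y$ gives $\kef$. Hence $G\setminus N[x]$ is disconnected with restricted components. A quantitative edge-count then shows that no component of order $\ge 2$ can consist (up to the one possible $5$-vertex) entirely of vertices of degree $\ge 9$. Finally, choosing an $8$-vertex $x$ and a component $C$ of $G\setminus N[x]$ with $|C|\ge 2$ minimum, one finds a second $8$-vertex $y\in C$ and derives a contradiction by comparing the components of $G\setminus N[y]$ with $C$. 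Your ``Mader-style densification'' for $\delta\ge 7$ is not a substitute for this structure; without the $\ksix$-minor lemma and the component-minimality endgame, the argument does not close.
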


   \cref{t:exfun} is best possible in the sense that every $(K_{2,2,2,2}, 4)$-cockade (see the definition of an  $(H, k)$-cockade   in \cite{SongThomas2006}) on $n$ vertices has $4.5n-12$ edges but no $\ket$ minor. \medskip

 This paper is organized as follows. In the next section, we introduce the necessary definitions and collect several tools which we will need later on. In Section~\ref{s:coloring}, we prove \cref{t:main}. In Section~\ref{s:exfun}, we prove \cref{t:exfun} using more involved arguments.\medskip 
 
 \section{Notation and tools}

 Let $G$ be a graph.   If $x,y$ are adjacent
vertices of   $G$, then we denote by $G/xy$ the graph obtained from $G$
by contracting the edge $xy$ and deleting all resulting parallel
edges. We simply write $G/e$ if $e=xy$.  If $u,v$ are distinct nonadjacent vertices of   $G$, then by
$G+uv$ we denote the graph obtained from $G$ by adding an edge
with ends $u$ and $v$.  If $u,v$ are adjacent or equal, then we define
$G+uv$ to be $G$.  Similarly, if  $M\subseteq E(G)\cup E(\overline{G})$, then   by
$G+M$ we denote the graph obtained from $G$ by adding  all the edges of $M$ to $G$.   Every edge in $\overline{G}$  is  called a \dfn{missing edge} of $G$. For a vertex $x\in V(G)$, we will use $N(x)$ to denote the set of vertices in $G$ which are adjacent to $x$.
We define $N[x] = N(x) \cup \{x\}$.  The degree of $x$ is denoted by $d_G(x)$ or
simply $d(x)$.   If  $A, B\subseteq V(G)$ are disjoint, we say that $A$ is \emph{complete} to $B$ if each vertex in $A$ is adjacent to all vertices in $B$, and $A$ is \emph{anticomplete} to $B$ if no vertex in $A$ is adjacent to any vertex in $B$.
If $A=\{a\}$, we simply say $a$ is complete to $B$ or $a$ is anticomplete to $B$.   We use $e(A, B)$ to denote the number of edges between $A$ and $B$ in  $G$. 
The subgraph of $G$ induced by $A$, denoted by $G[A]$, is the graph with vertex set $A$ and edge set $\{xy \in E(G) \mid x, y \in A\}$. We denote by $B \less A$ the set $B - A$,   and $G \less A$ the subgraph of $G$ induced on $V(G) \less A$, respectively. 
If $A = \{a\}$, we simply write $B \less a$    and $G \less a$, respectively.  An \dfn{$(A, B)$-path} in $G$, when $A$ and $B$ are not necessarily disjoint,  is a path with one end in $A$ and  the other in $B$ such that  all its internal vertices lie  in $G\less (A\cup B)$. We simply say an $(a, B)$-path if $A=\{a\}$. It is worth noting that each vertex in $A\cap B$ is an $(A, B)$-path. 
  For a positive integer $k$,  a $k$-vertex is a vertex of degree $k$, and a $k$-clique   is a set of $k$ pairwise adjacent vertices.     
 Let $\mathcal{F}$ be a family of graphs. A graph $G$ is \emph{$\mathcal{F}$-free} if it has no subgraph isomorphic to $H$ for every    $H\in\mathcal{F}$. We simply say $G$ is $H$-free if  $\mathcal{F}=\{H\}$.  The \dfn{join} $G+H$ (resp. \dfn{union} $G\cup H$) of two 
vertex-disjoint graphs
$G$ and $H$ is the graph having vertex set $V(G)\cup V(H)$  and edge set $E(G)
\cup E(H)\cup \{xy\, |\,  x\in V(G),  y\in V(H)\}$ (resp. $E(G)\cup E(H)$).  
We use the convention   ``A :="  to mean that $A$ is defined to be
the right-hand side of the relation.  Finally,  if $H$ is a connected subgraph of  a graph  $G$ and $y\in   V(H)$, we simply say that we   \dfn{contract $H\less y$ onto $y$}  when  we contract  $H$  to a single vertex, that is, contract all the edges of $H$. 
 \medskip

 To prove \cref{t:main}, we need to investigate the basic properties of contraction-critical graphs.  For a positive integer $k$, a graph $G$ is \dfn{$k$-contraction-critical} if $\chi(G)=k$ and every proper minor of $G$ is $(k-1)$-colorable.   
 Lemma~\ref{l:alpha2}   is  a   result 
  of Dirac~\cite{Dirac60}.  

\begin{lem}[Dirac~\cite{Dirac60}]\label{l:alpha2}   Let $G$ be a  $k$-contraction-critical graph. Then for each  $v\in V(G)$, \[\alpha(G[N(v)])\le d(v)-k+2.\] 
\end{lem}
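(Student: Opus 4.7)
The plan is to argue by contradiction. Suppose that for some $v\in V(G)$ there is an independent set $S\subseteq N(v)$ with $|S|\ge d(v)-k+3$, and set $T:=N(v)\less S$, so $|T|\le k-3$. Since $G$ is $k$-contraction-critical, a standard criticality argument (delete any vertex of degree less than $k-1$, extend a $(k-1)$-coloring of the proper subgraph) forces $\delta(G)\ge k-1$, so $|S|\ge 2$. The subgraph induced by the star with centre $v$ and leaves $S$ is therefore a connected subgraph on at least three vertices.

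I would contract this star to obtain a graph $H$ in which $\{v\}\cup S$ has been collapsed to a single vertex $w$; since $|\{v\}\cup S|\ge 3$, $H$ is a proper minor of $G$, hence admits a proper $(k-1)$-coloring $c$ by the criticality hypothesis. The main step is to lift $c$ to a proper $(k-1)$-coloring of $G$: I would keep the color of every vertex outside $\{v\}\cup S$, assign every vertex of $S$ the color $c(w)$, and finally pick for $v$ any color in $\{1,\dots,k-1\}$ distinct from $c(w)$ and from the colors used on $T$. Such a color for $v$ exists because the forbidden set has size at most $|T|+1\le(k-3)+1=k-2<k-1$.

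The only thing needing care is properness, and this is exactly where the independence of $S$ is used: edges inside $S$ do not exist; edges from $S$ to $V(G)\less(\{v\}\cup S)$ descend from edges at $w$ in $H$, whose other endpoints are therefore not colored $c(w)$; edges from $v$ to $T$ are handled by the explicit choice of $c(v)$; and edges from $v$ to $S$ are safe since $c(v)\ne c(w)$. The resulting proper $(k-1)$-coloring of $G$ contradicts $\chi(G)=k$, completing the proof. I do not foresee a genuine obstacle here; beyond the bookkeeping, the only structural input is the standard minimum-degree bound for contraction-critical graphs.
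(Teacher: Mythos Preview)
Your argument is correct and is the standard proof of Dirac's lemma. The paper itself does not prove \cref{l:alpha2}; it simply quotes the result from Dirac~\cite{Dirac60}. That said, the same contraction-and-recolor idea you use (collapse $\{v\}\cup S$ to a single vertex, take a proper $(k-1)$-coloring of the resulting proper minor, then lift it back) is exactly what the paper carries out explicitly in its proof of \cref{l:wonderful}, so your approach is fully in line with the paper's methods. One minor remark: to conclude that $H$ is a proper minor you only need $|S|\ge 1$, not $|S|\ge 2$; your stronger bound holds anyway and does no harm.
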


 We will make use of the following results of   Mader~\cite{7con}.   It seems very difficult to improve Theorem~\ref{t:7conn}; it remains open whether every $k$-contraction-critical graph is $8$-connected for all $k\ge8$.  Let $z_1, z_2, z_3, z_4$ be four vertices in a graph $G$.  We say that $G$ contains a \dfn{$K_4$  minor rooted at $z_1, z_2, z_3, z_4$} if there exist $Z_1, Z_2, Z_3, Z_4\subseteq V(G)$ such that $z_i\in Z_i$ and $G[Z_i]$ is connected for  all $i\in [4]$, and $Z_i$ and $Z_j$ are disjoint and there is an edge between $Z_i$ and $Z_j$ in $G$ for $1\le i<j\le4$. 
 
\begin{thm}[Mader~\cite{7con}]~\label{t:rootedK4}   For all $i\in[4]$, let $Z_i\subseteq V(G)$ with $z_i\in Z_i$ such that  $Z_i\cap Z_j=\emptyset$ for   $1\le i<j\le 4$, and $\alpha(G[\{z_1,z_2, z_3, z_4\}]\le2$. If   there exists a $(z_i, z_j)$-path consisting only of vertices from $Z_i\cup Z_j$ for   $1\le i<j\le 4$, then $G$  has a $K_4$ minor rooted at $\{z_1,z_2, z_3, z_4\}$.
\end{thm}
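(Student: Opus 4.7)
The plan is to prove \cref{t:rootedK4} by induction on $\sum_{i=1}^4 |Z_i|$. The base case is $\sum_i|Z_i|=4$, where every $Z_i$ equals $\{z_i\}$; here each $(z_i,z_j)$-path in $Z_i\cup Z_j=\{z_i,z_j\}$ must be the single edge $z_iz_j$, so $\{z_1,z_2,z_3,z_4\}$ already induces a $K_4$ and the rooted $K_4$ minor is immediate (and the hypothesis $\alpha\le 2$ is automatic).

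For the inductive step, suppose some $Z_i$ has $|Z_i|\ge 2$, say $Z_1$, and pick $v\in Z_1\less\{z_1\}$. First, I would try the reduction $Z_1\mapsto Z_1\less\{v\}$: if each of the three required $(z_1,z_j)$-paths can still be routed inside $(Z_1\less\{v\})\cup Z_j$, then we invoke induction. Otherwise $v$ is \emph{critical} in $Z_1$, meaning that for some $j$ every $(z_1,z_j)$-path in $Z_1\cup Z_j$ passes through $v$. In that case I would try to reassign $v$ from $Z_1$ to a neighbouring $Z_j$ (possibly shrinking a $(z_1,z_j)$-path in the process) and again invoke induction.

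After exhausting all such reductions, we arrive at a \emph{minimal} configuration in which every non-root vertex in every $Z_i$ is critical and none can be reassigned. The structure of each $Z_i$ is now very restricted: essentially a tree-like union of internally disjoint $(z_i,w)$-subpaths, each subpath ending at a vertex $w$ with a neighbour in some other $Z_j$. At this point I would apply the hypothesis $\alpha(G[\{z_1,z_2,z_3,z_4\}])\le 2$, which forces the four roots to contain either a triangle or a perfect matching in $G$. This triangle or matching supplies three or two of the six rooted edges of the desired $K_4$ minor ``for free'', and the remaining edges are realised by carving out four connected branch sets $V_i\subseteq Z_i$, each obtained by taking $z_i$ together with those portions of the minimal $(z_i,z_j)$-paths that lie inside $Z_i$, chosen consistently so that the $V_i$'s stay pairwise disjoint.

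The main obstacle is the minimal-case construction. The $Z_i$'s are not required to be connected in $G[Z_i]$, and the six paths may interleave the sets $Z_i$ and $Z_j$ in complicated ways, so substantial care is needed to verify that after the reductions the interleaving simplifies enough that the branch sets $V_i$ can be extracted simultaneously. The hypothesis $\alpha(G[\{z_1,\dots,z_4\}])\le 2$ is what makes this feasible, since the guaranteed triangle or matching among the roots reduces the number of rooted edges that must be manufactured from paths, and without it one can exhibit generic disjoint-path configurations that admit no rooted $K_4$ minor.
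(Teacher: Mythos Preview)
The paper does not prove \cref{t:rootedK4}; it is quoted from Mader~\cite{7con} as a tool and used as a black box (indeed the paper remarks, before \cref{l:noh8}, that Mader's proof ``was written in German'' and therefore gives an ad hoc argument there to avoid relying on it). So there is no proof in the paper to compare your proposal against.

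Assessing your proposal on its own terms: what you have written is a strategy outline, not a proof, and the gap is exactly where you say it is. Your induction parameter is $\sum_i|Z_i|$, and your two reduction moves are (i)~delete a non-root vertex $v\in Z_1$, or (ii)~reassign $v$ from $Z_1$ to some $Z_j$. Move~(i) decreases the parameter, but move~(ii) does not; so ``reassign and invoke induction'' is circular unless you also show the parameter strictly drops, and you have not. You then fall back to analysing a ``minimal'' configuration, but that analysis is only sketched: you assert that each $Z_i$ becomes ``essentially a tree-like union of internally disjoint $(z_i,w)$-subpaths'' and that one can carve out connected, pairwise disjoint branch sets $V_i\subseteq Z_i$, yet neither claim is argued. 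The difficulty is real: the six given paths may weave through $Z_i\cup Z_j$ arbitrarily, a single vertex of $Z_i$ may lie on several of the paths at once, and nothing in your reductions forces the interleaving to simplify. The hypothesis $\alpha(G[\{z_1,\dots,z_4\}])\le 2$ does help (it supplies two or three rooted edges for free), but you still need to manufacture the remaining three or four rooted edges from the paths simultaneously, and that is precisely the step you have left as an ``obstacle''. Until that step is filled in, the argument is incomplete.
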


\begin{thm}[Mader~\cite{7con}]\label{t:7conn}  
For all $k \ge 7$, every $k$-contraction-critical graph is $7$-connected.
\end{thm}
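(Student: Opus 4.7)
Fix $k\ge 7$ and let $G$ be a $k$-contraction-critical graph. Suppose for contradiction that $\kappa(G)\le 6$ and choose a vertex cut $S$ of minimum size $s:=|S|\le 6$. Write $V(G)\setminus S=A\cup B$ with $A,B\neq\emptyset$ and no edge between $A$ and $B$. Set $G_A:=G[A\cup S]$ and $G_B:=G[B\cup S]$; each is a proper subgraph of $G$, hence a proper minor, so each is $(k-1)$-colorable. My plan is to upgrade these two colorings into a single $(k-1)$-coloring of $G$ by making them agree on $S$, which will contradict $\chi(G)=k$.

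To force agreement on $S$, I would consider the \emph{augmented} graphs $G_A^+$ and $G_B^+$ obtained from $G_A$ and $G_B$ by turning $S$ into a clique. If both $G_A^+$ and $G_B^+$ are proper minors of $G$, then each admits a $(k-1)$-coloring; in any such coloring the $s\le 6\le k-1$ vertices of $S$ receive pairwise distinct colors, so by permuting the color palette on the $B$-side we can match the colors on $S$ to those forced by a chosen coloring of $G_A^+$. Pasting the two colorings gives a $(k-1)$-coloring of $G$ and the desired contradiction.

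The core step is therefore to realize $G_A^+$ as a minor of $G$, and symmetrically $G_B^+$. It suffices to find, inside $G[B\cup S]$, a family of internally vertex-disjoint paths, one $(u,v)$-path for each non-edge $uv$ of $G[S]$, with all internal vertices in $B$; contracting each such path onto one of its ends in $S$ and deleting any leftover vertices of $B$ exhibits $G_A^+$ as a minor of $G$. Because $S$ is a minimum cut, every vertex of $S$ has a neighbor in $B$ (otherwise $S\setminus\{v\}$ would still separate $A$ from $B$), so the vertices of $S$ are genuinely attached to $G[B]$. Menger's theorem in $G[B\cup S]$ supplies many internally disjoint paths between any non-adjacent pair in $S$, and rooted-minor techniques in the spirit of \cref{t:rootedK4} let us pick these paths simultaneously for all missing edges of $G[S]$.

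The hard part is exactly this rooted routing when $s\in\{5,6\}$: in general graphs a rooted $K_s$-minor at a prescribed $s$-tuple need not exist, and at this point one must exploit the full strength of the assumptions. The strategy I would use is to iteratively contract connected pieces of $B$ onto chosen vertices of $S$: any obstruction to forming the next required $(u,v)$-path yields a vertex cut in $G[B\cup S]$ separating $u$ from $v$, which when combined with $S\setminus\{u,v\}$ produces a vertex cut of $G$ of size at most $s$ that splits $B$ non-trivially, contradicting the minimality of $s=\kappa(G)$; alternatively, if an obstruction instead forces some non-trivial contraction to collapse $B\cup S$ onto fewer than $s$ vertices, the resulting proper minor of $G$ together with $G_A$ produces a $(k-1)$-coloring of $G$, contradicting $k$-contraction-criticality. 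Either way, the assumption $\kappa(G)\le 6$ is untenable, and the theorem follows.
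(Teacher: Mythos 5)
The paper does not reprove this theorem; it cites Mader's 1968 paper directly (and notes elsewhere that Mader's proof was written in German). So I am assessing your proposal on its own merits against the known difficulty of the statement, not against a proof given in this paper.

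Your overall paradigm---split at a minimum cut $S$, paste a $(k-1)$-coloring of a ``cliqued-up'' $G_A^+$ with one of $G_B^+$---is the right shape of argument, and the pasting itself is fine once $G_A^+$ and $G_B^+$ are known to be proper minors: since $|S|\le 6\le k-1$ and $S$ is a clique in each, $S$ receives distinct colors in each coloring and the palettes can be matched by a permutation. The genuine gap is the assertion that $G_A^+$ and $G_B^+$ \emph{are} minors of $G$. Realizing $G_A^+$ as a minor requires pairwise disjoint connected branch sets $V_s\subseteq\{s\}\cup B$ for $s\in S$ supplying all non-edges of $G[S]$, i.e.\ a rooted minor in $G[B\cup S]$ rooted at $S$. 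A single $(u,v)$-path through $B$ is easy, but simultaneously routing paths for \emph{all} missing edges of $G[S]$ so that they collapse into disjoint connected branch sets is a hard rooted-minor problem already for $|S|\in\{5,6\}$, and the minimality of $|S|$ does not hand it to you. Your ``iteratively contract'' paragraph is exactly where this must be proved, and as written it is a restatement of the difficulty rather than a proof: an obstruction to routing the next $(u,v)$-path is a small separator of $G[B\cup S]$, but combining it with $S\setminus\{u,v\}$ does not in general yield a separator of $G$ of size $<|S|$ (it can easily have size $\ge|S|$, which is permitted), and the second alternative (``a contraction collapses $B\cup S$ onto fewer than $s$ vertices'') is not a coherent dichotomy. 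Note also that $G[B\cup S]$ need not inherit high connectivity from $G$ even though $S$ is a minimum cut of $G$, so the appeal to Menger and to \cref{t:rootedK4} inside $G[B\cup S]$ is not justified. This is precisely why Mader's theorem is nontrivial (the paper itself remarks that improving it even to $8$-connectivity for $k\ge 8$ is open), and it is the reason the paper cites Mader rather than reproving the result.
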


  \cref{l:wonderful}  on contraction-critical graphs  turns out to be very powerful, as the existence of pairwise vertex-disjoint paths  is guaranteed without using the connectivity of such  graphs.  The proof of \cref{l:wonderful}  uses  Kempe chains. We recall the proof here as it will be needed in the proof of \cref{l:rootedK4}. Recall that    every edge in $\overline{H}$ is a \dfn{missing edge} of a graph $H$.

   \begin{lem}[Rolek and Song~\cite{RolekSong17a}]\label{l:wonderful} 
Let $G$ be any $k$-contraction-critical graph. Let $x\in V(G)$ be a vertex of
     degree $k + s$ with $\alpha(G[N(x)]) = s + 2$ and let $S \subset N(x)$ with
    $ |S| = s + 2$ be any independent set, where $k \ge 4$ and $s \ge 0$ are integers.
     Let $M$ be a set of missing edges of $G[N(x) \setminus S]$.  Then there
     exists a collection $\{P_{uv}\mid uv\in M\} $ of paths in $G$ such that
     for each $uv\in M$, $P_{uv}$ has ends $u, v$ and all its internal vertices
     in $G \setminus N[x]$. Moreover,  if vertices $u,v,w,z$ with $uv,wz\in M$ are distinct, then
     the paths $P_{uv}$ and $P_{wz}$ are vertex-disjoint.
 \end{lem}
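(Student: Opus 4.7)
The plan is to exhibit all of the paths $P_{uv}$ simultaneously as Kempe chains inside a single carefully chosen $(k-1)$-coloring of $G \setminus x$. I would begin by contracting the star on $\{x\}\cup S$ — that is, the $s+2\ge 2$ edges joining $x$ to $S$ — to a single new vertex $w$. Because this involves contracting at least two edges, the resulting graph is a proper minor of $G$, so by contraction-criticality it admits a proper $(k-1)$-coloring. Assign $w$ the color $0$ and lift to $G\setminus x$ by coloring every member of $S$ with $0$. Since $S$ is independent in $G$, and every vertex outside $\{x\}\cup S$ adjacent to some $s\in S$ is a neighbor of $w$ in the minor (hence has been given a color different from $0$), this lift $c$ is a proper $(k-1)$-coloring of $G\setminus x$ in which $S$ is monochromatic of color $0$.

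The crucial structural step is a counting argument on $N(x)\setminus S$. Since $|N(x)|=k+s$ and $|S|=s+2$, we have $|N(x)\setminus S|=k-2$, and every such vertex is a neighbor of $w$ and therefore receives a color in $\{1,\dots,k-2\}$ under $c$. Because $\chi(G)=k$, the coloring $c$ cannot extend to $x$, so all $k-1$ colors must appear in $N(x)$; combined with $c(S)=\{0\}$ this forces $c$ to restrict to a \emph{bijection} from $N(x)\setminus S$ onto $\{1,\dots,k-2\}$. This bijection is the structural engine driving the rest of the argument.

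Now, for each missing edge $uv\in M$, set $\alpha=c(u)$ and $\beta=c(v)$, which are distinct elements of $\{1,\dots,k-2\}$ by the bijection. Consider the Kempe chain $C$ containing $u$ in the subgraph of $G\setminus x$ induced by the color classes of $\alpha$ and $\beta$. If $v\notin C$, then swapping colors $\alpha$ and $\beta$ on $C$ produces a proper $(k-1)$-coloring of $G\setminus x$ in which color $\alpha$ is entirely absent from $N(x)$: by the bijection, no vertex of $N(x)\setminus S$ other than $u$ carries color $\alpha$ or $\beta$, and $S$ carries only color $0$, so the sole change inside $N(x)$ is that $u$ is recolored $\beta$. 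This modified coloring extends to $x$ with color $\alpha$, contradicting $\chi(G)=k$. Hence $v\in C$, yielding a $(u,v)$-path $P_{uv}$ whose internal vertices all carry color $\alpha$ or $\beta$ and therefore avoid $S$ (color $0$), avoid $x$ (uncolored), and by the bijection avoid every other vertex of $N(x)\setminus S$, placing the interior of $P_{uv}$ inside $G\setminus N[x]$ as required.

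Vertex-disjointness then follows essentially for free: if $uv$ and $wz$ are distinct missing edges on four distinct vertices, the bijection forces $\{c(u),c(v)\}$ and $\{c(w),c(z)\}$ to be disjoint subsets of $\{1,\dots,k-2\}$, so the two paths live in disjoint color classes of $c$ and cannot share any vertex. The most delicate point of the argument — and the one on which I would focus attention — is verifying that the Kempe swap behaves as claimed, namely that no previously color-$\beta$ vertex of $N(x)$ other than $v$ can be dragged into $C$ during the swap, and this is precisely what the bijection on $N(x)\setminus S$ guarantees.
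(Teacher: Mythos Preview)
Your proposal is correct and follows essentially the same approach as the paper's proof: contract $\{x\}\cup S$ to a single vertex, take a proper $(k-1)$-coloring of the resulting proper minor, observe that the $k-2$ vertices of $N(x)\setminus S$ receive pairwise distinct colors (else the coloring would extend to $x$), and then realize each $P_{uv}$ as a Kempe chain in the two relevant color classes, with disjointness coming for free from the color bijection. The only cosmetic difference is that you lift the coloring back to $G\setminus x$ and argue there, whereas the paper argues directly in the contracted graph $H$; the content is identical.
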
 

\begin{proof} Let $G$, $x$, $S$ and $M$ be  given as in the statement.  Let $H$ be obtained from $G$ by contracting $S\cup\{x\}$ to a single vertex, say  $w$.
Then $H$ is $(k-1)$-colorable. Let $c : V(H)\rightarrow \{1,2, \dots, k-1\}$ be a proper  $(k-1)$-coloring of $H$. We may assume that $c(w)=1$.
Then  each of the colors $ 2,  \dots, k-1$ must appear in $G[N(x)\less S]$, else we could assign $x$ the missing color and all vertices in $S$ the color $1$ to  obtain a proper $(k-1)$-coloring of $G$, a contradiction.
Since $|N(x)\less S|=k-2$,   we have $c(u)\ne c(v)$ for any two distinct vertices $u, v$ in $G[N(x)\less S]$.   
 We next claim that  for each $uv\in M$   there must exist a path between $u$ and $v$ with its internal vertices in $G\less N[x]$. Suppose not. 
Let $H^*$ be the subgraph of $H$ induced by the vertices colored  $c(u)$ or $c(v)$ under the coloring $c$. Then $V(H^*)\cap N(x)=\{u,v\}$. Notice that  $u$ and $v$ must belong to different components of $H^*$ as there is no  path between $u$ and $v$ with its internal vertices in $G\less N[x]$.
By switching the colors on the component of $H^*$ containing $u$, we obtain a $(k-1)$-coloring of $H$ with the color $c(u)$ missing on $G[N(x)\less S]$, a contradiction.
This proves that  there must exist a path $P_{uv}$ in $H^*$ with ends $u,v$ and all its internal vertices in $H^*\less N[x]$ for  each $uv\in M$.
Clearly, for any $uv,wz\in M$ with $u,v,w,z$ distinct, the paths $P_{uv}, P_{wz}$ are vertex-disjoint, because  no two  vertices of $u,v,w,z$  are colored the same under the coloring $c$.  
\end{proof}

We shall make frequent use of the following lemma in the proof of \cref{t:main}. 

\begin{lem}\label{l:rootedK4} Let $G$ be any $k$-contraction-critical graph. Let $x\in V(G)$ be a vertex of
     degree $k + s$ with $\alpha(G[N(x)]) = s + 2$ and let $S \subset N(x)$ with
    $ |S| = s + 2$ be any independent set, where $k \ge 4$ and $s \ge 0$ are integers.
  If $M=\{x_1y_1, x_1y_2, x_2y_1, x_2y_2, a_1b_{11}, \dots, a_1b_{1r_1},   \dots, a_mb_{m1}, \dots, a_mb_{mr_m}\}$ is a set of missing edges of $G[N(x)\less S]$, where  the vertices $x_1, x_2, y_1, y_2, a_1, \dots, a_m,
  b_{11}, \dots, b_{mr_m}\in N(x)\less S$ are all distinct, and  for all $1\le i  \le m$,  $a_ib_{i1}, \dots, a_ib_{ir_i}$ are $r_i$ missing edges  with $a_i$ as a common end, and $x_1x_2, y_1y_2\in E(G)$,   then  $G \se G[N[x]]+M$.    \end{lem}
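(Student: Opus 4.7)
The plan is to apply \cref{l:wonderful} to the full set $M$, obtaining a Kempe path $P_{uv}$ for every $uv\in M$ with all internal vertices in $G\setminus N[x]$, and then assemble branch sets $\{B_v:v\in N[x]\}$ realizing $G[N[x]]+M$ as a minor of $G$. I assign $B_v=\{v\}$ to each $v\in N[x]\setminus\{x_1,x_2,y_1,y_2,a_1,\dots,a_m\}$, and to each star-center $a_i$ the set
\[
B_{a_i}=\{a_i\}\cup\bigcup_{j=1}^{r_i}\bigl(V(P_{a_i b_{ij}})\setminus\{a_i,b_{ij}\}\bigr),
\]
which is connected because all the relevant paths meet at $a_i$. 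Whenever two missing edges in $M$ come from different stars, or one from a star and one from the $K_{2,2}$, their four endpoints are distinct, so \cref{l:wonderful} makes the corresponding paths vertex-disjoint; hence the $B_{a_i}$'s and the internals of the paths $P_{x_i y_j}$ are pairwise disjoint and disjoint from the singleton $B_v$'s.

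The delicate part is the $K_{2,2}$ missing edges $\{x_1 y_1,x_1 y_2,x_2 y_1,x_2 y_2\}$, since the paths $P_{x_i y_j}$ may share internal vertices at common endpoints. To handle this I would apply \cref{t:rootedK4} inside the subgraph $G'=G-(N[x]\setminus\{x_1,x_2,y_1,y_2\})$, so that whatever branch sets the theorem provides must automatically avoid the other vertices of $N[x]$. Take $z_1=x_1$, $z_2=x_2$, $z_3=y_1$, $z_4=y_2$. Using the Kempe coloring $c$ from the proof of \cref{l:wonderful}, the colors $c(x_1),c(x_2),c(y_1),c(y_2)$ are pairwise distinct, and each internal vertex of $P_{x_i y_j}$ has color $c(x_i)$ or $c(y_j)$ since $P_{x_i y_j}$ lies in the bichromatic $\{c(x_i),c(y_j)\}$-subgraph. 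Partitioning these internals by color, set
\[
Z_{x_i}=\{x_i\}\cup\bigl\{v\in(V(P_{x_i y_1})\cup V(P_{x_i y_2}))\setminus N[x]:c(v)=c(x_i)\bigr\},\qquad i\in\{1,2\},
\]
and symmetrically $Z_{y_j}$ for $j\in\{1,2\}$. The four $Z$'s are pairwise disjoint (their non-root members use four distinct colors and lie in $G\setminus N[x]$), $\alpha(G[\{x_1,x_2,y_1,y_2\}])\le 2$ because of the edges $x_1 x_2,y_1 y_2\in E(G)$, and every required $(z_i,z_j)$-path lies in the corresponding $Z_i\cup Z_j$: the two trivial pairs are handled by $x_1 x_2$ and $y_1 y_2$, while each $P_{x_i y_j}$ has its endpoints in $Z_{x_i},Z_{y_j}$ and its internal vertices alternately in $Z_{x_i}$ and $Z_{y_j}$. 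Hence \cref{t:rootedK4} applied in $G'$ supplies the connected, pairwise disjoint, pairwise adjacent branch sets $B_{x_1},B_{x_2},B_{y_1},B_{y_2}$.

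It remains to verify that $\{B_v:v\in N[x]\}$ is a pairwise disjoint family of connected sets and that every edge of $G[N[x]]+M$ has a witness edge of $G$ joining the appropriate pair. Edges of $G[N[x]]$ are witnessed trivially; each star missing edge $a_i b_{ij}$ is witnessed by the final edge of $P_{a_i b_{ij}}$, whose second-to-last vertex lies in $B_{a_i}$; and the four $K_{2,2}$ missing edges are realized by the rooted $K_4$. The main obstacle is handling the $K_{2,2}$ edges, where the na\"ive strategy of dumping each Kempe path into one endpoint's branch set fails because paths sharing an endpoint can share internal vertices; the resolution is to pass to the subgraph $G'$, partition the Kempe internals by color so that each $Z_{z_i}$ meets $N[x]$ only at its root, and use the edges $x_1 x_2,y_1 y_2$ to verify the $\alpha\le 2$ hypothesis of Mader's rooted-$K_4$ theorem.
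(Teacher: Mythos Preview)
Your approach is essentially the paper's: apply \cref{l:wonderful}, absorb each star of paths $P_{a_i b_{ij}}$ into the branch set of its center $a_i$, and for the $\{x_1,x_2\}\times\{y_1,y_2\}$ block invoke Mader's rooted-$K_4$ theorem with the $Z_i$'s given by the Kempe color classes on the four paths, using the edges $x_1x_2,\,y_1y_2$ to certify $\alpha\le 2$.

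One point needs tightening. You apply \cref{t:rootedK4} in $G'=G-(N[x]\setminus\{x_1,x_2,y_1,y_2\})$, and then assert that the family $\{B_v\}$ is pairwise disjoint. But as \cref{t:rootedK4} is stated, its conclusion is only that the ambient graph has a rooted $K_4$ minor; it does not say the four branch sets lie inside $\bigcup Z_i$. Since $V(G')$ still contains all the internal vertices of the star paths $P_{a_i b_{ij}}$, nothing you have written rules out $B_{x_1}\cap B_{a_1}\ne\emptyset$, say. The paper closes this gap by applying \cref{t:rootedK4} not to $G'$ but to the induced subgraph $G\bigl[\,\bigcup_{i,j} V(P_{x_i y_j})\,\bigr]$: the edges $x_1x_2$ and $y_1y_2$ and all six required $(z_i,z_j)$-paths already live there, so the rooted $K_4$ is forced to sit inside the four Kempe paths, which \cref{l:wonderful} has already made vertex-disjoint from every $P_{a_i b_{ij}}$. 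With that one-word change (replace $G'$ by the path-subgraph) your proof goes through and coincides with the paper's.
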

  
  \begin{proof}      By \cref{l:wonderful}, there
     exists a collection $\{P_{uv}\mid uv\in M\} $ of paths in $G$ such that
     for each $uv\in M$, $P_{uv}$ has ends $u, v$ and all its internal vertices
     in $G \setminus N[x]$.   In particular,    for any $1\le i< j \le m$, the paths $P_{a_ib_{i1}}, \dots, P_{a_i b_{ir_i}}$ are vertex-disjoint from the paths $P_{a_j b_{j1}}, \dots, P_{a_j b_{j r_{j}} }$ and $P_{x_1y_1}, P_{x_1y_2}, P_{x_2y_1}, P_{x_2y_2}$; $P_{x_1y_1}$ and  $ P_{x_2y_2}$ are vertex-disjoint; $  P_{x_1y_2}$ and $ P_{x_2y_1} $ are vertex-disjoint but each of $  P_{x_1y_2}$ and $ P_{x_2y_1} $ is not necessarily vertex-disjoint from $P_{x_1y_1}$ and  $ P_{x_2y_2}$.   By contracting each of $P_{a_i b_{i\ell}}\less b_{i\ell}$ onto $a_i$ for all $i\in[m]$ and $\ell\in [r_i]$, we see that $G\se G[N[x]]+(M\less \{x_1y_1, x_1y_2, x_2y_1, x_2y_2\})$.  We next apply \cref{t:rootedK4} to show that  $G\se G[N[x]]+M$. \medskip

     Let $z_1=x_1, z_2=x_2, z_3=y_1, z_4=y_2$.  From the proof of \cref{l:wonderful},  each of $P_{x_1y_1}$, $P_{x_1y_2}$, $P_{x_2y_1}$, $P_{x_2y_2}$ is a Kempe chain, and vertices of $z_1, z_2, z_3, z_4$ are colored differently under the coloring $c$, where $c$ is given in the proof of \cref{l:wonderful}.  For each $i\in[4]$, let  $Z_i$ be the set of vertices $v$  of $G[V(P_{x_1y_1})\cup V(P_{x_1y_2})\cup V(P_{x_2y_1})\cup V(P_{x_2y_2})]$ such that $c(v)=c(z_i)$.      Then  $z_i\in Z_i$ and $Z_i\cap Z_j=\emptyset$ for  $1\le i< j\le 4$.  Note that $\alpha(G[\{z_1, z_2, z_3, z_4\}]\le 2$ because $z_1z_2, z_3z_4\in E(G)$; the $(z_1, z_2)$-path has only one edge $z_1z_2$, the $(z_3, z_4)$-path has only one edge $z_3z_4$, and the $(z_i, z_j)$-path for each $i\in[2]$ and $j\in\{3,4\}$ is $P_{x_iy_j}$.  By \cref{t:rootedK4}, we see that  $G[V(P_{x_1y_1})\cup V(P_{x_1y_2})\cup V(P_{x_2y_1})\cup V(P_{x_2y_2})]$ has a $K_4$ minor rooted at $z_1, z_2, z_3, z_4$. It follows that  $G\se G[N[x]]+M$, as desired.
  \end{proof}

  \begin{rem} \cref{l:rootedK4}  can be   applied      when  \[M=\{x_1y_1, x_1y_2, x_2y_1, x_2y_2, a_1b_{11}, \dots, a_1b_{1r_1},  \dots, a_mb_{m1}, \dots, a_mb_{mr_m}\}\] is a subset of  edges and missing edges of  $G[N(x)\less S]$, where    $x_1, x_2, y_1, y_2, a_1, \dots, a_m,
  b_{11}, \dots, b_{mr_m}\in N(x)\less S$ are all distinct, and $x_1x_2, y_1y_2\in E(G)$.  Under those circumstances, it suffices to apply \cref{l:rootedK4} to $M^*$, where $M^*= \{e\in M\mid e \text{ is a missing edge of } G[N(x)\less S])\}$. It is straightforward to see that  $G\se G[N[x]]+M$. 
  \end{rem}
\begin{figure}[htb]
\centering
\includegraphics[scale=0.6]{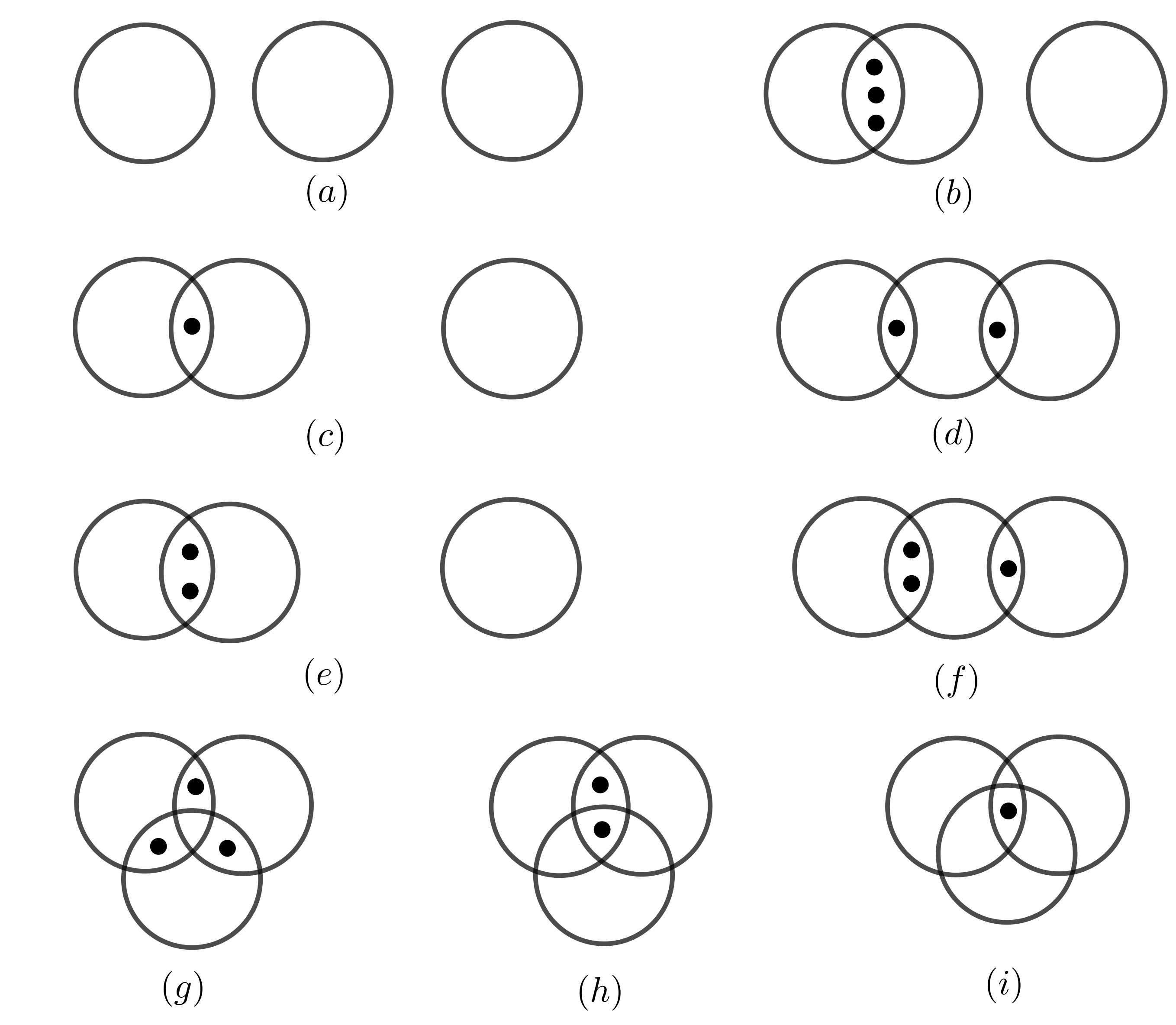}
\caption{The nine possibilities  for three $5$-cliques  in Theorem~\ref{t:goodpaths}.}
\label{fig:threek5s}
\end{figure}

Finally we need a tool to find a desired $\kef$ minor through three different $5$-cliques in   $7$-connected graphs. This method was first introduced  by Robertson, Seymour and Thomas~\cite{RST} to prove Hadwiger's Conjecture for $t=6$: they found a desired $K_6$ minor via three different $4$-cliques in $6$-connected non-apex graphs. This method   was later extended by Kawarabayashi and Toft~\cite{KT05} to find a desired $K_7$ minor via three different $5$-cliques in $7$-connected  graphs.

\begin{thm}[Kawarabayashi and Toft~\cite{KT05}]\label{t:goodpaths}
Let $G$ be a $7$-connected graph such that $|G| \geq 19$.
If $G$ contains  three different  $5$-cliques, say $L_1, L_2, L_3$,   such that  $|L_1\cup L_2\cup L_3|\ge 12$,    that is,  they fit into one of the nine configurations depicted in Figure~\ref{fig:threek5s}, then $G\se K_7$. In particular, $G$ has seven pairwise vertex-disjoint ``good paths", where  a ``good path"  is  an $(L_i, L_j)$-path in $G$ with  $i \neq j$.
\end{thm}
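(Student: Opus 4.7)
The plan is to build a $K_7$ minor directly from the three $5$-cliques together with seven pairwise vertex-disjoint $(L_i,L_j)$-paths; the ``In particular'' clause will fall out of the construction. Let $a_{ij}$ denote the number of good paths chosen with one end in $L_i$ and the other in $L_j$. Since each $L_i$ has five vertices, the total number of ``clique-slots'' is $15$, and the hypothesis $|L_1\cup L_2\cup L_3|\ge 12$ says that at most three slots are shared. The nine configurations in Figure~\ref{fig:threek5s} exhaust the overlap patterns, and for each I would fix a target distribution $(a_{12},a_{13},a_{23})$ with $a_{12}+a_{13}+a_{23}=7$ and $a_{ij}\le \min\{|L_i|,|L_j|\}=5$; for pairwise disjoint cliques a $3\text{-}2\text{-}2$ split works, and in the other eight cases the shared vertices can be used as trivial paths of length zero to rebalance the split.

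\emph{From seven disjoint good paths to a $K_7$ minor.} Given such a collection of paths, I would contract each path onto one of its endpoints, merging one vertex of $L_i$ with one vertex of $L_j$ and absorbing the interior. After seven merges the original $15$ clique-slots collapse to exactly seven branch sets. I would then verify, case by case, that any two branch sets $B, B'$ are adjacent: either both contain a vertex of a common $L_i$ and inherit a clique-edge, or their clique-vertices lie across two distinct cliques that already share a vertex (hence are adjacent after identification). This gives $G\se K_7$, and the seven paths are exactly the promised good paths.

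\emph{Producing the seven disjoint good paths.} The core step is a Menger/linkage argument powered by $7$-connectivity. Suppose for contradiction that no system of seven disjoint good paths with the prescribed endpoints exists. Then some $T\subseteq V(G)$ with $|T|\le 6$ separates the prescribed endpoint multiset in the auxiliary graph obtained from $G$ by deleting the interiors of the three $L_i$'s where they cannot be used as internal vertices. Since $G$ is $7$-connected, such a $T$ cannot disconnect $G$ outright, so the obstruction must land inside $L_1\cup L_2\cup L_3$; a careful audit of how $T$ meets the overlaps in the current configuration, combined with the slack $|V(G)\setminus(L_1\cup L_2\cup L_3)|\ge 19-15=4$ supplied by the external vertices, either completes the linkage by a rerouting or produces a separator of size $<7$ in $G$, contradicting $7$-connectivity.

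\emph{Main obstacle.} The hardest part is the simultaneous routing of the paths: they must avoid one another, avoid the interiors of the cliques in the prescribed way, and still hit the prescribed endpoints. I expect this to require a Mader-style rooted linkage argument generalising \cref{t:rootedK4} from four roots to up to ten, and to split into the nine configurations of Figure~\ref{fig:threek5s}. The trickiest case should be the fully disjoint one, $|L_1\cup L_2\cup L_3|=15$, because no shared vertex is available as a free trivial path and all seven path interiors must be squeezed into the $\ge 4$ external vertices; conversely, the highly overlapping configurations make the $K_7$-assembly step delicate since one must keep track of which identifications are forced by shared clique-vertices versus by path-contractions.
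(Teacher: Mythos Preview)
The paper does not prove \cref{t:goodpaths}: it is quoted from Kawarabayashi--Toft~\cite{KT05} (their Lemma~5), and the sentence immediately following the statement says explicitly that the existence of the seven good paths is extracted from the proof in~\cite{KT05}. So there is nothing here to compare your proposal against; the heavy lifting you are sketching is precisely the content of~\cite{KT05}, and the present paper treats it as a black box. What the paper \emph{does} prove is the easy direction you labelled ``From seven disjoint good paths to a $K_7$ minor'': that is exactly \cref{c:minorfromgoodpaths}, and there the counting is slightly different from yours. In the fully disjoint configuration, seven disjoint good paths produce \emph{eight} branch sets (fifteen clique vertices, seven identifications), not seven; one checks that at most three pairs fail to share a clique, giving $\ket\supseteq K_7$. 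Your ``collapse to exactly seven branch sets'' is off by one.

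As for the hard direction, your outline is the right shape but the Menger step as written does not go through. Menger's theorem gives seven disjoint paths between two prescribed sets of size~$\ge 7$; it says nothing about routing paths among \emph{three} terminal sets with a prescribed distribution $(a_{12},a_{13},a_{23})$. That is a linkage problem, and $7$-connectivity alone is far from enough to force arbitrary linkages (the Robertson--Seymour linkage theorem needs connectivity growing with the number of terminals). The actual argument in~\cite{KT05} does not invoke a general rooted-linkage lemma of the kind you suggest; it is a bespoke case analysis across the nine overlap patterns, repeatedly applying Menger between carefully chosen pairs of sets and exploiting the clique edges to reroute. Your proposal correctly identifies this as the main obstacle, but ``a Mader-style rooted linkage argument generalising \cref{t:rootedK4} from four roots to up to ten'' is not a plan so much as a restatement of the difficulty; \cref{t:rootedK4} relies crucially on $\alpha\le 2$ among the four roots and does not scale in the way you need.
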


It is worth noting that \cref{t:goodpaths} corresponds to \cite[Lemma 5]{KT05}, where the existence of such seven  ``good paths" follows from the proof  of   \cite[Lemma 5]{KT05}.  \cref{t:goodpaths} implies the following: 

\begin{cor}
\label{c:minorfromgoodpaths}
Let $G$ be a $7$-connected graph such that $|G| \geq 19$.
If $G$ contains  three different  $5$-cliques, say $L_1, L_2, L_3$,   such that  $|L_1\cup L_2\cup L_3|\ge 12$,    that is,  they fit into one of the nine configurations depicted in Figure~\ref{fig:threek5s}, then $G\se\kef$.
Moreover, if $L_1 \cap L_2 \cap L_3 =\emptyset$, then $G\se\ket$.
\end{cor}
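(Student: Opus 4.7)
My plan is to deduce this corollary by tightening the branch-set construction behind \cref{t:goodpaths}. First, I apply \cref{t:goodpaths} to obtain seven pairwise vertex-disjoint good paths $Q_1,\dots,Q_7$. The proof of \cite[Lemma~5]{KT05} uses the three $5$-cliques $L_1,L_2,L_3$ together with these seven paths to produce seven branch sets $B_1,\dots,B_7\subseteq V(G)$ whose contraction yields $K_7$; in each $B_\ell$ the clique vertices play the role of \emph{anchors}, while the interior vertices of the $Q_k$'s are added only to ensure that a branch set spanning anchors in different $L_i$ is connected.

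Under the hypothesis $|L_1\cup L_2\cup L_3|\ge 12$, the seven branch sets collectively contain at least twelve clique vertices as anchors, so by pigeonhole at least one $B_\ell$ contains two anchor vertices $u,w$ that both lie in the same clique $L_i$. I would set $B_8:=\{w\}$ and $B_\ell':=B_\ell\setminus\{w\}$. Since $L_i$ is a clique, $u\in B_\ell'$ is adjacent to $w\in B_8$, so splitting off $w$ preserves the original $K_7$ adjacency between $B_\ell'$ and every other $B_j$; moreover $B_8$ is automatically adjacent in $G$ to every $B_j$ that meets $L_i$. The only possibly missing edges in the contracted eight-vertex graph are thus of the form $B_8B_j$ with $B_j\cap L_i=\emptyset$. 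A configuration-by-configuration inspection of \cref{fig:threek5s} shows that at most four such $B_j$ exist, giving $G\se\kef$.

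For the \emph{moreover} part, I would additionally choose $i\in\{1,2,3\}$, and the pair $\{u,w\}$ to split off, so that $L_i$ intersects as many of the remaining branch sets as possible. The extra hypothesis $L_1\cap L_2\cap L_3=\emptyset$ rules out precisely the configurations in which a triple overlap allows four branch sets to simultaneously avoid $L_i$, reducing the missing-edge count to at most three and yielding $G\se\ket$.

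The main obstacle will be carrying out the configuration-by-configuration verification cleanly: since the KT05 construction is implicit in the proof of \cref{t:goodpaths} rather than stated as a clean formula, extracting the precise distribution of anchors among $B_1,\dots,B_7$ in each of the nine configurations of \cref{fig:threek5s} requires careful bookkeeping, and in configurations with large pairwise intersections among $L_1,L_2,L_3$ the choice of $L_i$ (and hence of the split-off vertex $w$) must be made configuration-sensitively in order to stay below the required missing-edge thresholds of four and three, respectively.
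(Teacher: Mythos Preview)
Your pigeonhole step does not go through. Twelve anchors among seven branch sets forces some $B_\ell$ to contain at least two anchors, but nothing forces those two anchors to lie in a \emph{common} $L_i$: a branch set built from a good path between $L_1$ and $L_2$ will typically contain one vertex of each, with no clique repeated. More precisely, the fifteen incidences $(i,v)$ with $v\in L_i$ are spread over seven sets, so some $B_\ell$ receives at least three --- but three incidences can be realised by one anchor in each clique, or by one anchor in $L_i\cap L_j$ together with one in $L_k$, with no clique contributing two distinct vertices to $B_\ell$. You would need four incidences in a single branch set to force a repeat within one $L_i$, and $\lceil 15/7\rceil=3$. There is a second gap: even when you do locate $u,w\in L_i\cap B_\ell$, deleting $w$ may disconnect $B_\ell$. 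If, say, $B_\ell$ consists of $u,w\in L_1$ together with a good path from $w$ to some $b\in L_2$, then $u$ is attached to the rest of $B_\ell$ only through the clique edge $uw$, so $B_\ell\setminus\{w\}$ is not a valid branch set.

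The paper avoids both issues by never unpacking the KT05 branch sets. It simply takes the seven good paths $Q_1,\dots,Q_7$ from \cref{t:goodpaths}, re-chooses them to minimise $|V(Q_1)\cup\dots\cup V(Q_7)|$, and records two consequences of minimality: no $Q_k$ has an internal vertex in $L_1\cup L_2\cup L_3$, and no vertex of any pairwise intersection $L_i\cap L_j$ lies on a nontrivial $Q_k$. With the paths cleaned in this way, the three $5$-cliques together with $Q_1,\dots,Q_7$ contract directly to an eight-vertex graph, and one reads off from each configuration in \cref{fig:threek5s} that (a)--(g) yield at most three missing edges while (h),(i) yield at most four. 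This is the same case check you anticipated, but starting from explicit disjoint paths rather than from an implicit branch decomposition is what makes it routine.
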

\begin{proof} By \cref{t:goodpaths}, $G$ has seven pairwise vertex-disjoint ``good paths". By choosing such seven ``good paths", say $Q_1, \ldots, Q_7$,  with $|V(Q_1)\cup \ldots\cup V(Q_7)|$ minimum, we may assume that no internal vertex of each $Q_i$  lies in $L_1\cup L_2\cup L_3$. Moreover, if $v \in L_i \cap L_j$ for some $i\ne j$, then $v$ does not belong to any ``good path" on at least two vertices. It is then straightforward to verify that each possibility of $L_1, L_2, L_3$, given in Figure~\ref{fig:threek5s}(a-g), together with $Q_1, \ldots, Q_7$, yields a $\ket$ minor; and each possibility of $L_1, L_2, L_3$, given in Figure~\ref{fig:threek5s}(h,i), together with $Q_1, \ldots, Q_7$, yields a $\kef$ minor, as desired.  
\end{proof}
\begin{figure}[htb]
\centering
 \includegraphics[scale=0.6]{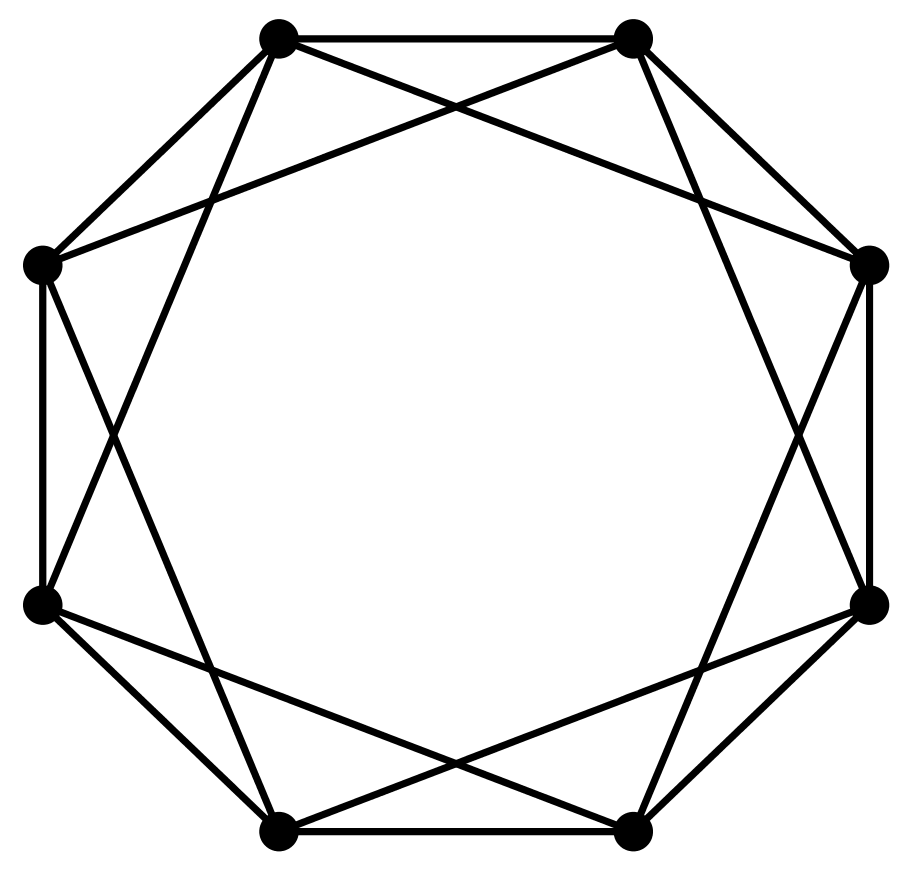}
\caption{The graph $H_8$.}
\label{H8}
\end{figure}

Finally we need a lemma of Rolek, Thomas and the second author. 

\begin{lem}[Rolek, Song and Thomas~\cite{RST22}]\label{l:h8} Let $H$ be a graph with $|H|=8$ and $\alpha(H)=2$. Then $H$  contains   $K_4$  or $H_8$ as a subgraph, where $H_8$  is depicted in Figure~\ref{H8}.  
\end{lem}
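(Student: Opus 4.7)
The plan is to pass to the complement $F := \overline{H}$, which is a triangle-free graph on eight vertices because $\alpha(H)=2$. Assume for contradiction that $H$ contains neither $K_4$ nor $H_8$. Since $K_4 \not\subseteq H$, we have $\omega(H)\le 3$, i.e.\ $\alpha(F)\le 3$. The goal is to show that $F$ must in fact be a spanning subgraph of $\overline{H_8}$, giving a contradiction.

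First, since $F$ is triangle-free, $N_F(v)$ is independent in $F$ for every $v$, so $|N_F(v)| \le \alpha(F) \le 3$, i.e.\ $\Delta(F)\le 3$. Next, I would rule out $\delta(F)\le 1$ using Ramsey's theorem: if $v$ satisfies $d_F(v)\le 1$, then $v$ has at least six $H$-neighbors, and the induced subgraph $H[N_H(v)]$ has $\alpha\le 2$ and is triangle-free (otherwise combined with $v$ it yields $K_4$), contradicting $R(3,3)=6$. So $\delta(F)\ge 2$, and the proof splits on whether $\delta(F)=2$ or $\delta(F)=3$.

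In the case $\delta(F)=2$, fix $v\in V(F)$ of $F$-degree $2$ with $N_F(v)=\{u_1,u_2\}$, and set $W := V(H)\setminus\{v,u_1,u_2\}$; then $v$ is $H$-adjacent to all five vertices of $W$, so $H[W]$ must be triangle-free (else $v$ together with the triangle gives $K_4$) with $\alpha\le 2$, forcing $H[W]\cong C_5$. The only remaining freedom is in the edges within $\{v,u_1,u_2\}$ (where $vu_1,vu_2\notin E(H)$) and between $\{u_1,u_2\}$ and $W$. Using $\alpha(H)=2$ (which forbids any independent triple on $3$ vertices) and $\omega(H)\le 3$ (which forbids $K_4$), one reduces to a small list of configurations and, for each, exhibits an explicit embedding of $H_8$ into $H$. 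In the case $\delta(F)=3$, the graph $F$ is cubic, triangle-free on eight vertices, with $\alpha(F)\le 3$; a triangle-free graph on $8$ vertices has $\alpha\ge 3$ by $R(3,3)=6$ applied twice, so $\alpha(F)=3$ exactly. Bipartite cubic graphs on eight vertices automatically satisfy $\alpha\ge 4$ and are excluded, leaving a short list of non-bipartite triangle-free cubic Ramsey $(3,4)$-graphs, each of which is checked directly against $\overline{H_8}$ from Figure~\ref{H8}.

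The main obstacle I anticipate is the $\delta(F)=2$ case: one has to track systematically which edges between $\{u_1,u_2\}$ and the cycle $H[W]\cong C_5$ are forced by $\alpha(H)=2$ and which are forbidden by $K_4$-freeness, and then locate a copy of $H_8$ inside each surviving graph (most naturally by identifying two 4-cycles or a matching of missing edges that together match the non-edges of $H_8$). The $\delta(F)=3$ case collapses to a finite inspection once the handful of candidate cubic graphs is enumerated, so the work really sits in the degree-$2$ analysis.
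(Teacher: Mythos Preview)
The paper does not prove \cref{l:h8}; it is quoted from~\cite{RST22} and used as a black box. There is therefore no in-paper argument to compare your proposal against.

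As an independent plan your outline is sound. Passing to the complement $F=\overline{H}$ and using $R(3,3)=6$ to pin down $2\le\delta(F)\le\Delta(F)\le3$ is the natural opening, and your reduction of the $\delta(F)=2$ case to a $C_5$ on $W$ with two pendant vertices $u_1,u_2$ is correct. You have also correctly located where the work sits: in the $\delta(F)=2$ branch one must enumerate the attachments of $u_1,u_2$ to the $C_5$ that survive both the $\alpha(H)=2$ and the $K_4$-freeness constraints, and then exhibit an explicit eight-vertex relabeling realizing $H_8\subseteq H$ in each case; in the $\delta(F)=3$ branch one must list the triangle-free cubic graphs on eight vertices with $\alpha=3$ and check each against $\overline{H_8}$. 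Two cautions before you carry this out. First, your sentence ``show that $F$ must in fact be a spanning subgraph of $\overline{H_8}$'' is the right target, but it presupposes you know $\overline{H_8}$ explicitly; extract the full edge/non-edge list of $H_8$ from Figure~\ref{H8} before starting the case analysis, since the verification in both branches hinges on matching that specific pattern. Second, in the cubic branch be sure your enumeration of candidate $F$'s is complete (and includes the possibility that $F$ is disconnected, though $\delta(F)\ge2$ and triangle-freeness rule out small components quickly); the claim that this is ``a short list'' is true but should be justified rather than asserted.
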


\section{Coloring graphs with no $\kef$ minor}\label{s:coloring}

 We begin this section with two lemmas.  \cref{l:noh8} below follows directly from \cref{l:rootedK4}. We give a proof here  without using   \cref{t:rootedK4} (its  proof was written in German). 

\begin{figure}[htb]
\centering
 \includegraphics[scale=0.7]{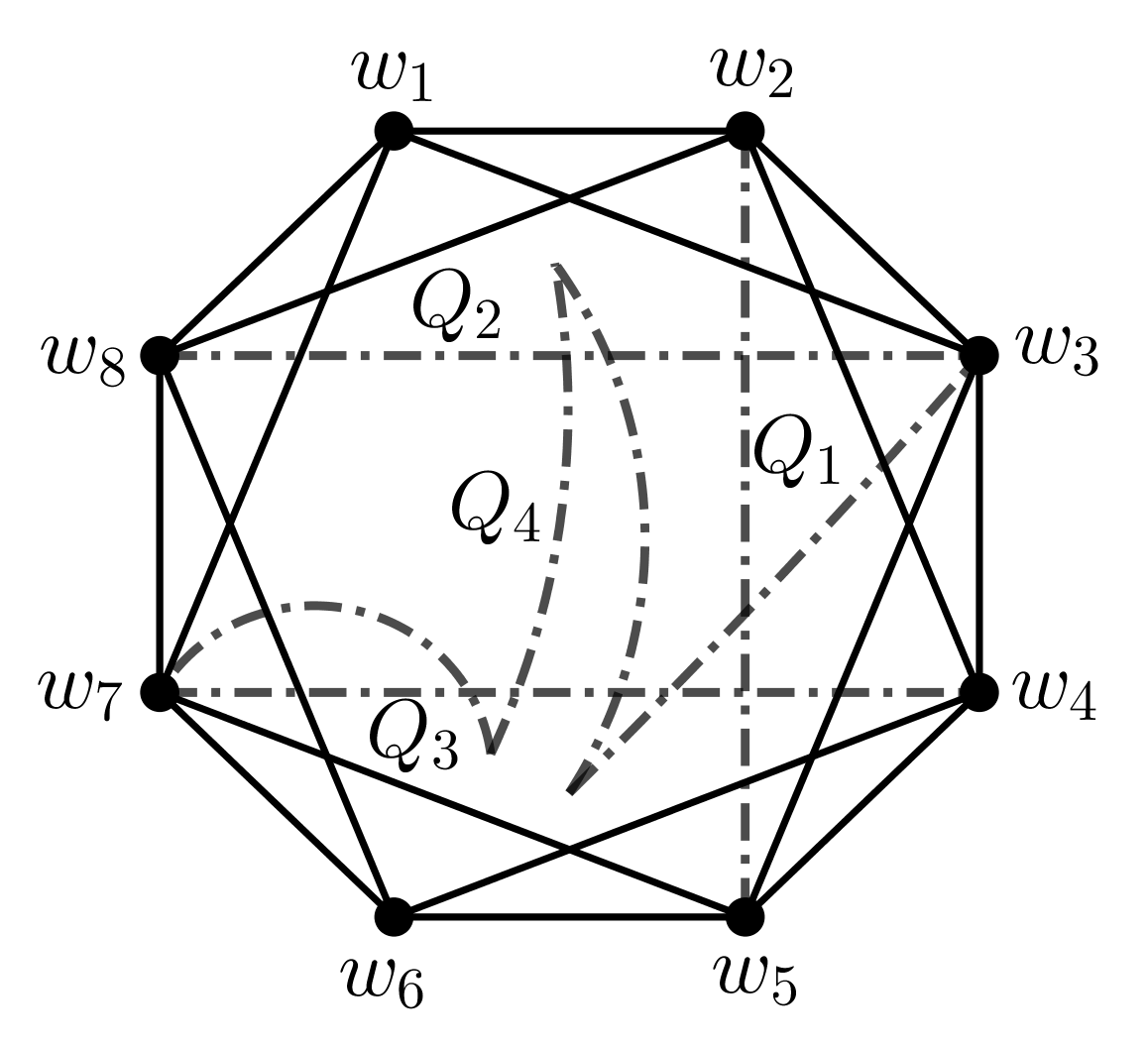}
\caption{Applying \cref{l:wonderful} to   $N(x)$ with $S=\{w_1, w_6\}$ and $M=\{w_2w_5, w_3w_8,  w_4w_7, w_3w_7\}$.}
\label{H8a}
\end{figure}
\begin{lem}
\label{l:noh8}
Let $G$ be an $8$-contraction-critical graph. If there exists $x \in V(G)$ such that $G[N(x)]$ is isomorphic to $H_8$, then $G\se\ket$.
\end{lem}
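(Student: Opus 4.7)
Since $G$ is $8$-contraction-critical and $d(x)=|H_8|=8=k$, we are in the setting of \cref{l:wonderful} and \cref{l:rootedK4} with $s=0$, and \cref{l:alpha2} gives $\alpha(G[N(x)])\le 2$; equality holds because $\alpha(H_8)=2$. Any non-edge of $H_8$ may therefore serve as the independent set $S\subset N(x)$ of size $s+2=2$ required by those lemmas.

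Following the hint of Figure~\ref{H8a}, I would pick $S=\{w_1,w_6\}$ and use the adjacency pattern of $H_8$ (Figure~\ref{H8}) to locate inside $G[N(x)\setminus S]$ a rooted-$K_4$ configuration, that is, four vertices $\{x_1,x_2,y_1,y_2\}$ with $x_1x_2,y_1y_2\in E(H_8)$ and the four cross-pairs $x_iy_j$ covered by missing or existing edges of $H_8$, together with a star consisting of the missing edge on the remaining two vertices of $N(x)\setminus S$. Packaging these missing edges as $M$ and feeding the tuple into \cref{l:rootedK4} (with $w_4w_8$, which is already an edge of $H_8$, playing the role of the fourth cross-edge per the remark following that lemma) delivers $G\succcurlyeq G[N[x]]+M$, a $9$-vertex minor in which $x$ is adjacent to every $w_i$ and the twelve original non-edges of $H_8$ inside $N(x)$ have been reduced by $|M|$.

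The final step is to contract a single edge $ab\in E(H_8)$ inside $G[N[x]]+M$, chosen so that the merged vertex $v_{ab}$ inherits, from $N(a)\cup N(b)$ together with the $M$-edges incident to $a$ or $b$, adjacencies to every remaining $w_i\in N(x)\setminus\{a,b\}$, and so that at most three of the residual non-edges (necessarily involving neither $a$ nor $b$) survive. The resulting $8$-vertex graph on $\{x,v_{ab}\}\cup(N(x)\setminus\{a,b\})$ then has $x$ and $v_{ab}$ universal, with at most three missing edges among the other six vertices; hence it is $K_8$ minus at most three edges, a member of $\ket$, giving $G\succcurlyeq\ket$.

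The main obstacle is the concrete bookkeeping on the finite graph $H_8$: one must verify that a rooted-$K_4$-plus-star packaging of missing edges is rich enough to leave few residual non-edges, and that an $H_8$-edge $ab$ can be chosen whose contraction in $G[N[x]]+M$ cleans up all but three of them at once. Both reduce to small finite checks on the explicit adjacency pattern shown in Figure~\ref{H8}, guided precisely by the choice of $S$ and $M$ depicted in Figure~\ref{H8a}.
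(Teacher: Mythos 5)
Your plan captures the right skeleton: apply \cref{l:wonderful}/\cref{l:rootedK4} with $S=\{w_1,w_6\}$, pack as many of the non-edges of $H_8$ as possible into an $M$ of the rooted-$K_4$-plus-star form (with $w_2w_5$ as the extra star edge), deduce $G\succcurlyeq G[N[x]]+M$, and finish by contracting one $H_8$-edge. This is exactly the route the paper acknowledges in the sentence preceding the lemma (``\cref{l:noh8} below follows directly from \cref{l:rootedK4}''), so the approach is sound in principle.

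However, your proposal differs from the paper's written proof, and it stops short of being a proof. The paper deliberately declines to invoke \cref{t:rootedK4} (Mader), and instead applies only \cref{l:wonderful} with $M=\{w_2w_5,\,w_3w_8,\,w_4w_7,\,w_3w_7\}$, then manages the path intersections by hand: the Kempe chains $Q_1,Q_2,Q_3$ (for $w_2w_5$, $w_3w_8$, $w_4w_7$) are pairwise disjoint, but $Q_4$ (for $w_3w_7$) can share vertices with $Q_2$ beyond $w_3$ and with $Q_3$ beyond $w_7$; the paper extracts subpaths $Q_4''$, $Q_4'''$ at carefully chosen first intersection points $u$ and $v$ so that the contraction onto $w_3$ and $w_7$ is well defined, and then contracts $w_6w_8$ to land in $\mathcal{K}_7^{-3}$, hence $G[N[x]]+M\succcurlyeq\mathcal{K}_8^{-3}$. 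Your write-up hands exactly this intersection control off to \cref{l:rootedK4} (and the Remark about pre-existing cross-edges like $w_4w_8$), which is legitimate but trades self-containment for appealing to Mader's theorem, whose proof the authors explicitly wanted to avoid. More significantly, you never actually carry out the ``small finite checks'' you flag at the end: you do not exhibit the specific $M$, do not verify it consists of missing edges of $H_8$, do not verify $x_1x_2,y_1y_2\in E(H_8)$ and that the hypothesis $\alpha(G[\{x_1,x_2,y_1,y_2\}])\le 2$ (needed for \cref{l:rootedK4}) holds, and do not identify the edge $ab$ whose contraction leaves at most three non-edges. As written this is a correct outline that would need those concrete verifications filled in before it is a proof; the paper supplies precisely that bookkeeping.
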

\begin{proof} Suppose $G[N(x)]=H_8$ for some $x\in V(G)$. Let $w_1, \dotsc, w_8$ be the vertices of $N(x)$ as depicted in 
 Figure~\ref{H8a}. Note that $|H_8|=8$ and $\alpha(H_8)=2$. 
By Lemma~\ref{l:wonderful} applied to $N(x)$ with $S = \{ w_1, w_6 \}$ and $M=\{w_2w_5, w_3w_8, w_4w_7, w_3w_7\}$,  there exist  pairwise vertex-disjoint  paths  $Q_1, Q_2, Q_3$ and another path $ Q_4$ such that   $Q_1, Q_2, Q_3, Q_4$  have ends  in $\{w_2, w_5\}$, $\{w_3, w_8\}$, $\{w_4, w_7\}$ and $ \{w_3, w_7\}$, respectively, and all their internal vertices are in $G\less N[x]$. Note that $Q_4$ and $Q_2$ may have  more than $w_3$   in common, and  $Q_4$ and $Q_3$   may have  more than $w_7$ in common; see   Figure~\ref{H8a}.  Let $u\in V(Q_4)\cap V(Q_2)$   such that the $(w_7, u)$-subpath $Q_4'$ of $Q_4$ and $Q_2$  have exactly $u$ in common; and let $v\in V(Q'_4)\cap V(Q_3)$ such that the $(u, v)$-subpath  $Q_4''$ of $Q_4'$    and $Q_3$ have exactly $v$ in common. Let $Q_4'''$ denote the $(w_7, v)$-subpath of  $Q_4'$. By contracting  $Q_1\less w_5$ onto $w_2$, both $Q_2\less w_8$  and $Q_4''\less v$ onto $w_3$, and both $Q_3\less w_4$ and $Q'''_4 $ onto $w_7$,  we see that $H_8+M$ has a $\kst$ minor after contracting the edge $w_6w_8$. Hence  $G\se G[N[x]]+M\se  \ket$, as desired.
\end{proof}

\begin{lem}
\label{l:nok6k5}
Let $G$ be an $8$-contraction-critical graph.
If $G$ has a $6$-clique $A$ and a $5$-clique $B$ such that $B\not\subseteq A$, then $G\se\ket$.
\end{lem}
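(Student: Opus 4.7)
I will proceed by cases on $k := |A \cap B|$; since $B \not\subseteq A$ we have $0 \le k \le 4$. By \cref{t:7conn} the graph $G$ is $7$-connected, so $|G| \ge 8$. In the boundary case $|G| = 8$ a $7$-connected graph on $8$ vertices is $K_8$, which contains a member of $\ket$ as a subgraph; hence I may assume $|G| \ge 9$.

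The unifying construction is the following. Fix $v \in B \setminus A$; since $B$ is a clique, $v$ is adjacent to the $k$ vertices of $A \cap B$, so the $7$-vertex set $A \cup \{v\}$ is missing exactly the $6 - k$ edges from $v$ to $A \setminus B$. Pick $x \in V(G) \setminus (A \cup \{v\})$ and apply the fan lemma in the $7$-connected graph $G$ to obtain $7$ internally disjoint $(x, A \cup \{v\})$-paths whose endpoints exhaust $A \cup \{v\}$. Contracting each such path onto $x$ yields a super-vertex $x^*$ adjacent in a minor of $G$ to every vertex of $A \cup \{v\}$; this produces an $8$-vertex minor whose only missing edges are the same $6-k$ edges. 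For $k \in \{3, 4\}$ this has at most $3$ missing edges and already contains a member of $\ket$ as a subgraph, completing the argument.

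For $k \in \{0, 1, 2\}$ the minor above has $6 - k \ge 4$ missing edges, which is too many. My plan in this regime is to enlarge the branch set currently containing $v$. Since $|B \setminus A| = 5 - k \ge 3$, I merge $v$ with other vertices of $B \setminus A$ (joined to $v$ by edges of the clique $B$) into a single connected branch set $W$, and then absorb into $W$ internally disjoint paths routed in $G$ from $W$ to $A \setminus B$ via Menger's theorem. Each absorbed path supplies one new adjacency from $W$ to $A \setminus B$ and cancels one missing edge in the $8$-vertex minor. Because $G$ is $7$-connected and only the at most $11$ vertices of $A \cup B$ plus those committed to building $x^*$ are tied up, Menger's theorem in the subgraph of $G$ obtained by deleting the committed vertices supplies enough pairwise internally disjoint auxiliary paths to bring the missing-edge count down to at most $3$, producing the required $\ket$ minor.

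The main obstacle will be the routing step for $k \in \{0,1,2\}$: the auxiliary paths must be simultaneously pairwise internally disjoint, disjoint from the fan-lemma paths committed to $x^*$, and internally disjoint from $A \cup B$ except at their designated endpoints. Careful book-keeping of the already-committed vertices together with Menger's theorem in the still-sufficiently-connected remaining subgraph will be the technical heart of the argument; note that an alternative route is to try to apply \cref{c:minorfromgoodpaths} using two $5$-subsets of $A$ augmented by $x^*$ together with $L_3 = B$ when $k = 0$, which already yields $|L_1 \cup L_2 \cup L_3| = 12$ and $L_1 \cap L_2 \cap L_3 = \emptyset$, but this shortcut does not immediately extend to $k \in \{1,2\}$ without a second auxiliary vertex.
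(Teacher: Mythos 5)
Your handling of $k\in\{3,4\}$ is correct, and it is a genuinely different route from the paper: you build an auxiliary super-vertex $x^*$ via a fan of $7$ internally disjoint $(x,A\cup\{v\})$-paths and then read off that $A\cup\{v\}\cup\{x^*\}$ has at most $6-k\le3$ missing edges, whereas the paper instead finds $5-t$ pairwise disjoint $(A\setminus B,B\setminus A)$-paths in $G\setminus(A\cap B)$ and a further path from $a_6$, contracting onto the $b_i$'s. Both work for these two cases.

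For $k\in\{0,1,2\}$, however, what you have is a plan with a genuine gap, not a proof. After you commit $A$ ($6$ vertices), the branch set of $x^*$ (at least the vertex $x$, possibly more), and the vertices of $B$ to be absorbed into $W$, you have already tied up at least $12$ vertices in the $k=0$ case. You then invoke Menger's theorem in "the still-sufficiently-connected remaining subgraph" to find three more pairwise internally disjoint $(W,A\setminus B)$-paths avoiding all of this. But $G$ is only guaranteed $7$-connected; deleting more than $7$ vertices may disconnect what remains, so the connectivity you need is simply not available from \cref{t:7conn}, and nothing else in your argument supplies it. There is a second unaddressed interaction: the fan paths building $x^*$ are required only to avoid $A\cup\{v\}$ internally, so they may run through vertices of $B\setminus\{v\}$ — exactly the vertices you later want to put into $W$ — and you never arrange for these two constructions to be disjoint. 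Your proposed shortcut via \cref{c:minorfromgoodpaths} also does not stand as written: $x^*$ is a branch set in a minor, not a vertex of $G$, so it cannot be used to assemble an actual $5$-clique $L_i$ of $G$ as that corollary requires.

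The paper sidesteps all of this by never introducing $x^*$. It works entirely inside $G\setminus(A\cap B)$, which is $(7-t)$-connected with $t\le3$, finds the $5-t$ Menger paths $Q_{t+1},\dots,Q_5$ from $A\setminus B$ to $B\setminus A$ there, and then uses $7$-connectivity once more to route a single extra path $Q$ from $a_6$ into $\bigcup V(Q_i)\setminus\{a_i\}$; a shortest such $Q$ meets only one $Q_i$, which one may relabel as $Q_5$. Contracting $Q_{t+1},\dots,Q_3$ fully, $Q_4\setminus a_4$ onto $b_4$, and $Q_5\setminus a_5$ together with $Q\setminus a_6$ onto $b_5$ leaves exactly eight branch sets with at most three missing edges, all without ever exceeding the connectivity budget. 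If you want to salvage your approach for $k\le2$, the cleanest fix is to drop $x^*$ entirely and adopt the paper's routing.
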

\begin{proof} Let $A:=\{a_1, \ldots, a_6\}$ and $B:=\{b_1, \ldots, b_5\}$. Let $t:=|A\cap B|$. Then $0\le t\le 4$ because $B\not\subseteq A$. We may further assume that $b_i=a_i$ for each $i\in[t]$ when $t\ne 0$. Assume first $t=4$. Then $G[A\cup B]$ contains $K_7^<$ as a subgraph. 
By Theorem~\ref{t:7conn}, $G$ is $7$-connected. We obtain a $K_8^<$ minor   by contracting a component of $G\less (A\cup B)$  to a single vertex, as desired. 
 We may assume that  $0\le t\le 3$.  Then there exist $5-t$  pairwise vertex-disjoint paths $Q_{t+1}, \dots, Q_5$  between $A\less B$ and $B\less A$ in $G\less (A\cap B)$. We may assume that $Q_i$ has ends $a_i, b_i$ for each  $i\in\{t+1,  \dots,5\}$.  
Then $G \less \{a_1, \dots, a_5 \}$ is connected, so there must exist a path $Q$ with one end  $a_6$ and the other in  $V(Q_{t+1}\less a_{t+1})  \cup \dots \cup V(Q_5\less  a_5)$, say in $V(Q_5\less  a_5)$. 
We may assume that $Q$  is vertex-disjoint from $Q_{t+1}, \dots, Q_4$ by choosing $Q$ to be a shortest such path.   
Now contracting both $Q_5\less a_5$ and $Q\less a_6$ onto $b_5$, $Q_4\less a_4$ onto $b_4$, and all the edges of $Q_j $   for each $j\in\{t+1, \ldots, 3\}$ when $t\le 2$, we  see that $G\se G[A\cup B]+\{a_{t+1}b_{t+1}, \ldots, a_5b_5, a_6b_5\}\se \ket$.    \end{proof}

We are now ready to prove Theorem~\ref{t:main}, which we restate for convenience.\main* 

\begin{proof} Suppose the assertion is false. Let $G$ be a  graph with no $\kef$ minor such  that $\chi(G) \ge 8$.  
We may choose such a  graph $G$ so that it is $8$-contraction-critical. Then $\delta(G)\ge7$,    $G$ is $7$-connected  by  \cref{t:7conn}, and     $\delta(G) \le 8$ by \cref{t:exfun}. 
Let $x\in V(G)$ be of minimum degree. Since $G$ is $8$-contraction-critical and has no $\kef$ minor, by  \cref{l:alpha2} applied to $G[N(x)]$, we see that   \medskip

\setcounter{counter}{0}

\noindent \refstepcounter{counter}\label{e:alpha} (\arabic{counter})\, $\delta(G)=8$ and $\alpha(G[N(x)]) = 2$.\medskip

\noindent  \refstepcounter{counter}\label{e:74} (\arabic{counter})\, 
 $G $ is $\mathcal{K}_7^{-4}$-free. 
\begin{proof}Suppose $G$ contains an  $H\in \mathcal{K}_7^{-4}$ as a subgraph. Since $G$ is $7$-connected,  we obtain a  
  $\kef$ minor of $G$ by contracting a component of $G\less V(H)$  to a single vertex, a contradiction.\end{proof}

 \noindent  \refstepcounter{counter}\label{e:K4} (\arabic{counter})\, 
$G[N(x)]$ contains a  $4$-clique.

\begin{proof} Suppose $G[N(x)]$ is $K_4$-free. By \cref{l:h8},   $G[N(x)]$ contains $H_8$ as a subgraph. Let the vertices of $H_8$ be  labeled  as in Figure~\ref{H8a}. Then $w_1w_6\notin E(G)$ because $G[N(x)]$ is $K_4$-free.  We consider the worst scenario that $G[N(x)]=H_8$. By \cref{l:noh8}, $G \se \ket$, a contradiction.  
\end{proof}

\noindent \refstepcounter{counter}\label{e:n8} (\arabic{counter})\,  
$n_8\ge 25$, where $n_8$ denotes  the number of vertices of degree eight in $G$. 
 
 \begin{proof} Suppose $|n_8|\le 24$.   Then $e(G)\ge (8n_8+9(|G|-n_8))/2\ge (9|G|-24)/2$. By \cref{t:exfun}, $G\se \kef$, a contradiction. 
 \end{proof}
 
\noindent  \refstepcounter{counter}\label{e:nok6} (\arabic{counter})\,    
$G$ is $K_6$-free. 

\begin{proof} Suppose  $G$ has a $6$-clique $A$. By (\ref{e:n8}), $n_8\ge 25$. Let $y\in V(G)\less A$ be an $8$-vertex in $G$. By  (\ref{e:K4}), $N[y]$ has a $5$-clique $B$  such that  $y\in B$. Then $B\not\subseteq A$. By \cref{l:nok6k5},  $G\se\ket$, a contradiction. \end{proof}

\noindent  \refstepcounter{counter}\label{e:2K4} (\arabic{counter}) \,    
$G[N(x)]$ has two  disjoint $4$-cliques.   

\begin{proof}  Suppose $G[N(x)]$ does not have  two  disjoint $4$-cliques.  By (\ref{e:nok6}),   $G[N(x)]$ is $K_5$-free. Then $\delta(G[N(x)])\ge 3$ because  $\alpha(G[N(x)])=2$.  We claim that   $\delta(G[N(x)])\ge4$.  Suppose not. Let $y\in N(x)$ such that $y$ has exactly three neighbors $y_1, y_2, y_3\in N(x)$. Let $Y:=\{y, y_1, y_2, y_3\}$ and $Z:=\{z_1, z_2, z_3, z_4\}=N(x)\less Y$. Then $Z$ is a $4$-clique in $G$ and $Y$ is not a clique. We may assume that $y_1y_2\notin E(G)$.  Note that $e(y_i, Z)\ge 1$ for each $i\in[2]$. We may assume that $y_1z_1\in E(G)$. Suppose $y_1$ is anticomplete to $\{z_2, z_3, z_4\}$. Then $y_1y_3\in E(G)$ and $y_2$ must  be  complete to $\{z_2, z_3, z_4\}$. By  \cref{l:rootedK4} applied to $N(x)$ with $S=\{y, z_4\}$ and 
 $M=\{y_1z_2, y_1z_3, y_3z_2, y_3z_3, y_2z_1\}$, we obtain a $\ket$ minor from $G[N[x]]+M$ after contracting the edge $yy_2$, a contradiction.  Thus  $e(y_1, Z)\ge2$. Similarly,  $e(y_2, Z)\ge2$.  Note that $e(\{y_1, y_2\}, Z)\le 5$, else $G[Z\cup\{x, y_1, y_2\}]$ is not $\kst$-free, contrary to (\ref{e:74}). We may assume that $e(y_1, Z)=2$ and  $y_1z_2\in E(G)$. Then $y_1$ is anticomplete to $\{ z_3, z_4\}$ and so $y_2$ must  be  complete to $\{z_3, z_4\}$. By   \cref{l:rootedK4} applied to $N(x)$ with $S=\{y, z_1\}$ and 
 $M=\{y_1z_3, y_1z_4, y_3z_3, y_3z_4, y_2z_2\}$, we obtain a $\kef$ minor from $G[N[x]]+M$ after contracting the edge $yy_1$, a contradiction. This proves that $\delta(G[N(x)])\ge 4$, as claimed.\medskip

By (\ref{e:K4}), $G[N(x)]$ has a $4$-clique $A$.  Let  $N(x) = \{ a_1, \dotsc, a_4, b_1, \dotsc, b_4 \}$, where $A=\{ a_1, \dotsc, a_4 \}$ is a $4$-clique and $B:=\{ b_1, \dotsc, b_4 \}$ is not. 
We may assume that $b_1 b_2 \notin E(G)$.  Since $G[N(x)]$ is $K_5$-free and $\alpha(G[N(x)])=2$, we   may further assume that $b_1a_1, b_2a_2\notin E(G)$. Then $b_1a_2, b_2a_1\in E(G)$. Note that $e(b_i, A)\ge 2$ for each $i\in[2]$ because $\delta(G[N(x)])\ge 4$. On the other hand,   $e(\{b_1, b_2\}, A)\le 5$, else $G[A\cup\{x, b_1, b_2\}]$ is not $\kst$-free, contrary to (\ref{e:74}). We may assume that $e(b_1, A)=2$ and  $2\le e(b_2, A)\le3$. We may further assume that $b_1a_3, b_2b_3\in E(G)$. Then $b_1  a_4\notin E(G)$. It follows that  $b_2a_4\in E(G)$ and $b_1$ is complete to $\{b_3, b_4\}$.  By  \cref{l:rootedK4} applied to $N(x)$ with $S=\{b_1, a_4\}$ and 
 $M=\{b_2a_2, b_2a_3, b_3a_2, b_3a_3, b_4a_1\}$, we obtain a $\kef$ minor from $G[N[x]]+M$ after contracting the edge $b_1b_4$, a contradiction.   
\end{proof}

To complete the proof, recall that $x$ is an $8$-vertex in $G$. Since $n_8\ge25$ by (\ref{e:n8}), let   $y\in V(G)\less N[x]$   such that $y$ is an $8$-vertex in $G$. By (\ref{e:2K4}),  let $A, B$ be disjoint $4$-cliques of $G[N(x)]$ and let $C$ be a $4$-clique of $G[N(y)]$. Let $L_1:=A\cup\{x\}$, $L_2:=B\cup\{x\}$ and $L_3:=C\cup \{y\}$. Then $L_1\cap L_2=\{x\}$ and $L_1\cap L_2\cap L_3=\emptyset$.   By Corollary~\ref{c:minorfromgoodpaths},   $L_1, L_2, L_3$ are not as depicted in Figure~\ref{fig:threek5s}(c,d,f,g) because  $G$ has no $\kef$ minor.   We may   assume that  $|L_1 \cap L_3| \ge |L_2 \cap L_3|$. Then  $|L_1 \cap L_3|   \ge2$, else $L_1, L_2, L_3$ are as depicted in Figure~\ref{fig:threek5s}(c,d,g).  \medskip

 Let $L_1:=\{x,  x_1, \ldots,  x_4\}$, $L_2:=\{x, y_1, \ldots,  y_4\}$ and $L_3:=\{y, z_1, \ldots z_4\}$.  
Suppose first    $|L_1 \cap L_3| = 4$. We may assume that $x_i=z_i$ for each $i\in[4]$.  
Note that  $G[L_1 \cup L_3]=K_6^-$.
By Menger's Theorem,  there exist four pairwise vertex-disjoint   $(\{z_1, \ldots, z_4\}, \{y_1, \ldots, y_4\})$-paths, say $Q_1, \ldots, Q_4$,  in $G\less\{x, y\}$. We may assume that $Q_j$ has ends $y_j, z_j$ for each $j\in[4]$. Let $Q$ be a shortest path in $G\less \{x, y,  y_1, y_2, y_3, z_4\}$ with one end $y_4$ and the other in  $ V(Q_1\less y_1)\cup V(Q_2\less y_2)\cup V(Q_3\less y_3)$. We may assume that $Q$ is vertex-disjoint from $Q_1$ and $Q_2$ but contains a vertex on $Q_3\less y_3$.  Let $u\in V(Q)\cap V(Q_3)$ and $v\in V(Q)\cap V(Q_4)$ such that  the $(u,v)$-subpath  of $Q$ has no internal vertex in $V(Q_3)\cup V(Q_4)$. Let  $Q_3^*$ be  the $(u, y_3)$-subpath of $Q_3$ and $Q_4^*$ be the $(v, z_4 )$-subpath of $Q_4$.   Then  $G\se G[L_1\cup L_2\cup L_3]+\{y_1z_1, y_2z_2, y_3z_3, y_4z_3,y_4z_4\}\se \kef$ by first contracting all the edges of $Q_1, Q_2$, then  the $(y_4, v)$-subpath  of $Q_4$ onto $y_4$, $Q_4^*\less v$ onto $z_4$, $Q_3^*\less u$ onto $y_3$, and both $Q\less v$ and the $(u, z_3)$-subpath of $Q_3 $ onto $z_3$, a contradiction.  Suppose next  $|L_1 \cap L_3|= 3$. Then $G[L_1\cup L_3]$ is not $\mathcal{K}_7^{-4}$-free, contrary to (\ref{e:74}). Thus  $|L_1 \cap L_3| \le 2$.  Since $|L_1 \cap L_3|   \ge2$ and  $|L_1 \cap L_3| \ge |L_2 \cap L_3|$,  we see that   $|L_1 \cap L_3|=2$ and $0\le  |L_2 \cap L_3| \le2$.  Note that $ |L_2 \cap L_3|\ne 0$, else $L_1, L_2, L_3$
 are as depicted in Figure~\ref{fig:threek5s}(f).  Thus $|L_1 \cap L_3|=2$ and $1\le  |L_2 \cap L_3| \le2$.  We may  assume that $x_3=z_1, x_4=z_2$.  
\medskip

We first consider the case $  |L_2 \cap L_3| =2$. We may assume that $y_3=z_3, y_4=z_4$. Then there exist two vertex-disjoint $(\{x_1, x_2\}, \{y_1, y_2\})$-paths, say $Q_1, Q_2$, in $G\less\{x, x_3, x_4, y_3, y_4\}$. We may assume that $Q_j$ has ends $x_j, y_j$ for each $j\in[2]$. We may further assume   $y\notin V(Q_1)$.  Let $Q_2^*$ be the $(y_2, y)$-subpath of $Q_2$ when $y\in V(Q_2)$. 
Then $G\se G[L_1\cup L_2\cup L_3]+\{x_1y_1, x_2y_2\}\se \ket$ by contracting all the edges of $Q_1$ and $Q_2$ when $y\notin V(Q_2)$; and $G\se G[L_1\cup L_2\cup L_3]+\{x_1y_1, x_2y, y_2y\}\se   \mathcal{K}_8^{-2}$   by first contracting all the edges of $Q_1$, then $Q_2^*\less y$ onto $y_2$, and the $(y, x_2)$-subpath of $Q_2$ onto $x_2$ when $y\in V(Q_2)$, a contradiction. \medskip

It remains to consider the case $  |L_2 \cap L_3| =1$. We may assume that $z_3=y_4$. Then there exist three vertex-disjoint $(\{x_1, x_2, z_4, y\}, \{y_1, y_2, y_3\})$-paths, say $Q_1, Q_2, Q_3$, in $G\less\{x, x_3, x_4,  y_4\}$. We may assume that $Q_1$ has ends $\{x_1, y_1\}$. Note that $L_3\less (L_1\cup L_2)=\{y, z_4\}$. By symmetry, we may further assume that   $Q_3$ has ends $y_3, z_4$.    Then $Q_2$ has ends  $y_2, x_2$,  or   $y_2, y$. By contracting all the edges of $Q_1, Q_2$ and $Q_3$, we see that  $G\se G[L_1\cup L_2\cup L_3]+\{x_1y_1, x_2y_2, y_3z_4\}\se  \ket$   in  the former  case, and 
  $G\se G[L_1\cup L_2\cup L_3]+\{x_1y_1, y_2y, y_3z_4\}\se  \ket$     in the latter case, a contradiction.   \medskip

This   completes the proof of  \cref{t:main}.  
 \end{proof} 

\section{An extremal function for $\kef$ minors}\label{s:exfun}

 Throughout this section, if  $G$ is a graph and $K$ is a subgraph of $G$, then by $N(K)$ we denote
the set of vertices of $V(G)\less V(K)$ that are adjacent to a vertex of $K$.
 If $V(K)=\{x\}$, then  $N(K)=N(x)$. 
It can be easily checked that for each vertex  $x\in V(G)$, if $K$ is a component of $G\less N[x]$, then $N(K)$ is a minimal separating set of $G$. \medskip

We first  give a brief outline of the proof of \cref{t:exfun}. We follow the main ideas in   \cite{SongThomas2006}. Suppose for a contradiction that $G$ is a counterexample to \cref{t:exfun} with  as few vertices as possible.   Since deletion or contraction of edges does not produce smaller counterexamples,
it follows easily that $G$ has minimum degree at least five, and   every edge of $G$ belongs to   at least four triangles. With some effort it can be shown that $G$ is $5$-connected, and  has at most one $5$-vertex  but  no $6$-vertex and no $7$-vertex. As $e(G)=4.5|G|-12$, we see that $G$ has an  $8$-vertex. Fix such a vertex $x$.  We then show that $x$ is not adjacent to a $5$-vertex in $G$. If $G\less N[x]$ has a 
 a component $K$  such that 
$ M \subseteq N(K)$, where $M$ is the set of all vertices
of $N(x)$ that are not adjacent to every other vertex of $N(x)$, then we can find a vertex  $y\in N(x)$ such that $G[N(x)\less y]\se \ksix$, and so $G\se\kef$
  by contracting the connected graph $G[V(K)\cup \{y\}]$  to a single vertex. Thus we may assume that for no $8$-vertex $x$ 
such a component
exists. In particular, $G\less N[x]$ is disconnected. In the next step we  prove  that 
there is no component $K$ of $G\less N[x]$ with $|K|\ge2$ such that 
$d_G(v)\ge9$ for all
vertices $v\in V(K)$, except possibly one. 
  In the last step, we
select an $8$-vertex
$x\in V(G)$   to minimize the size of a
component $K$ of $G\less N[x]$ with $|K|\ge2$.
It follows easily that $K$ does not have a vertex that is an $8$-vertex in $G$.\medskip

We next prove  two  lemmas that will be needed in the proof of Theorem~\ref{t:exfun}.

\begin{lem}\label{l:computer}
Let $H$ be a graph with $|H| = 8$ and $\delta(H) \geq 4$. Then there exists  $x\in V(H)$ such that $H\less x$ has a  $\ksix$ minor.  
\end{lem}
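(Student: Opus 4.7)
The plan is to work in the complement $\overline{H}$, which has $8$ vertices and $\Delta(\overline{H}) \le 3$, so $e(\overline{H}) \le 12$. To produce a $\ksix$ minor in some $H\less x$, it suffices to find $x \in V(H)$ and an edge $uv \in E(H\less x)$ such that the $6$-vertex graph $(H\less x)/uv$ has at most $4$ missing edges; such a graph contains a spanning member of $\ksix$. Every $6$-vertex minor of the $7$-vertex graph $H\less x$ arises either by deleting one further vertex or by contracting one edge, and the vertex-deletion option has a tighter edge budget that already fails for $H = K_{4,4}$, so an edge contraction is essentially unavoidable.

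For a candidate triple $(x,u,v)$ with $uv \notin E(\overline{H})$, the number of missing edges of $(H\less x)/uv$ equals $A+B$, where
\[
A = |\{w \in V(H)\less \{x,u,v\} : uw,vw \in E(\overline{H})\}| \quad \text{and} \quad B = e(\overline{H}[V(H)\less \{x,u,v\}]).
\]
Writing $|E_0| := |\{xu,xv\}\cap E(\overline{H})| \le 2$, inclusion--exclusion gives $B = e(\overline{H}) - d_{\overline{H}}(x) - d_{\overline{H}}(u) - d_{\overline{H}}(v) + |E_0|$. The strategy is therefore to choose $x$ of maximum $\overline{H}$-degree together with a non-$\overline{H}$-edge $uv$ whose endpoints have high $\overline{H}$-degree but share few $\overline{H}$-neighbors, driving $A+B$ down to at most $4$.

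I would organize the proof as a case split on $e(\overline{H})$. When $e(\overline{H}) \le 9$, a short pigeonhole argument produces a triple with $B$ small and $A$ controlled by the degree bound $A \le \Delta(\overline{H}) \le 3$. The tight regime is $10 \le e(\overline{H}) \le 12$, and within it the main obstacle is the $3$-regular case $e(\overline{H}) = 12$: here $B = 3 + |E_0|$, so a valid triple must have either $|E_0| = 0$ with $A \le 1$ or $|E_0| = 1$ with $A = 0$. This reduces to a finite check over the short list of cubic graphs on $8$ vertices (among them $K_4 \sqcup K_4 = \overline{K_{4,4}}$, the cube $Q_3$, and the M\"obius ladder $M_4$). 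As a representative illustration, for $\overline{H} = K_4 \sqcup K_4$ one picks $u,v$ in opposite parts (so $N_{\overline{H}}(u)\cap N_{\overline{H}}(v)=\emptyset$) and any $x \in N_{\overline{H}}(u)$, obtaining $|E_0|=1$ and $A+B = 0 + 4 = 4$. For $\overline{H} = Q_3$ one chooses antipodal $u,v$, and for $\overline{H} = M_4$ one takes $u,v$ at cycle-distance $2$ together with an $x$ that is not $\overline{H}$-adjacent to either; the remaining cubic cases admit analogous explicit good triples.
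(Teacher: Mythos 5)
Your approach is genuinely different from the paper's: you work in $\overline{H}$ and reduce the problem to finding a triple $(x,u,v)$ with $A+B \le 4$, whereas the paper does a structural case analysis on $H$ itself (first $H$ with a $4$-clique, then $K_4$-free $H$ split by $\alpha(H)$, invoking Lemma~\ref{l:h8} when $\alpha(H)=2$). Your reduction and the formula for $A+B$ are correct. However, as written the proposal has real gaps: the case $e(\overline{H}) \le 9$ is dismissed as ``a short pigeonhole argument'' with no content supplied; the cases $e(\overline{H}) \in \{10,11\}$ are acknowledged as part of the ``tight regime'' but then not treated at all; and for $e(\overline{H})=12$ you explicitly exhibit good triples for only three of the six $3$-regular graphs on eight vertices ($K_4\sqcup K_4$ plus five connected ones), waving off the rest as ``analogous.'' Since the whole content of the lemma lives in precisely these verifications, this is a sketch of a plan rather than a proof.

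The cubic case can in fact be closed without enumeration by a degree-sum count that you may wish to add: $\sum_{\{u,v\}} |N_{\overline{H}}(u)\cap N_{\overline{H}}(v)| = \sum_w \binom{d_{\overline{H}}(w)}{2} \le 8\binom{3}{2} = 24$, while there are $28-12=16$ non-$\overline{H}$-adjacent pairs, so some non-adjacent $\{u,v\}$ has at most one common $\overline{H}$-neighbor. If it has none, then $N_{\overline{H}}(u)$ and $N_{\overline{H}}(v)$ partition the other six vertices, so every $x$ yields $|E_0|=1$, $A=0$; if it has exactly one, then exactly one vertex $x_0$ is $\overline{H}$-adjacent to neither, and $x=x_0$ yields $|E_0|=0$, $A=1$. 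Either way $A+B=4$. A similar counting argument, keeping track of the degree deficit when $e(\overline{H})<12$, should dispose of the remaining cases, but these are exactly the steps your proposal leaves out; until they are carried through, the proof is incomplete.
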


\begin{proof} We may assume that  $\delta(H) =4$ and every edge   is incident with a $4$-vertex in $H$. Suppose  $H\less x$ has no $\ksix$ minor for every $x$ in $H$. 
Suppose first  that $H$ has a $4$-clique, say $A:=\{a_1, \ldots, a_4\}$. Let $B:=V(H)\less A $. By the minimality of $e(H)$, we may assume that $a_i$ is a $4$-vertex in $H$ for each $i\in[3]$.  Let $b_i\in B$ be the unique neighbor of $a_i$ in $B$ for each $i\in[3]$. Note that $b_1, b_2, b_3$ are not necessarily distinct.  We claim that for all $a\in A$ and $b\in B$, if $ab\in E(H)$, then $a$ and $b$ have at least one common neighbor. Suppose not.  We may assume that $a\ne a_1$.  Let $H^*:=H\less a_1$. Then    $e(H^*/ab)=e(H^*)-1=(e(H)-4)-1\ge (16-4)-1=11=e(K_6)-4$. Thus $H\less a_1$     has a  $\ksix$ minor, a contradiction. This implies that  $b_ia_4\in E(H)$ for each $i\in[3]$. Note that     $e(b, A)\ge 1$ for each $b\in B$.  It follows that  $a_4$ is complete to $B$ and so $d(a_4)=7$. By the minimality of $e(H)$, we see that each vertex in $B$ is a $4$-vertex in $H$, which is impossible. This proves that $H$ is $K_4$-free. 
Suppose next  $\alpha(H)=2$. By  \cref{l:h8}, $H$ contains $H_8$ as a subgraph. Let the vertices of $H$ be   labeled as in Figure~\ref{H8}.   It is simple to check that   $H_8\less w_2$ has a  $\ksix$ minor after contracting the edge $w_1w_3$.  Thus  $H$ is $K_4$-free and   $\alpha(H)\ge 3$.  \medskip

It is straightforward to check that $H=K_{4,4}$ when $\alpha(H)=4$; and $K_{4,4}\less x$ has  a  $\ksix$ minor for every $x$ in $K_{4,4}$.  Thus  $\alpha(H)=3$. Let $S:=\{x_1, x_2, x_3\}$ be an independent set  of $H$ and let $V(H)\less S:=\{y_1, \ldots, y_5\}$. Since $d(x_i)\ge 4$ for each $i\in[3]$, we may assume that $\{y_1, y_2\}$ is complete to $S$.  Suppose  $y_3$ is complete to $S$. Since $K_{3, 4}$ has a $\ksix$ minor, we see that  $y_4$ is  neither complete to $S$ nor complete to $\{y_1, y_2, y_3\}$.  We may assume that $y_4x_1\notin E(H)$  and $y_4y_1\notin E(H)$. Then $H\less y_5$ has a $\ksix$  minor after contracting the edge $x_1y_1$, a contradiction. This proves that no $y_j$ is complete to $S$ for each $j\in\{3,4,5\}$.  It follows that $d(x_i)=4$, and $e(y_j, S)=2$ for each $i\in[3]$ and $j\in\{3,4,5\}$; in addition, we may assume that  $y_3$ is complete to $\{x_1, x_2\}$,  $y_4$ is complete to $\{ x_2, x_3\}$ and $y_5$ is complete to $\{ x_1, x_3\}$.   If $H[ \{y_3, y_4, y_5\}]=K_3$, then $H\less y_2$ has a $\ksix$  minor after contracting the edge $x_1y_1$, a contradiction. Thus we may assume that $y_3y_5\notin E(G)$ and $y_3y_2\in E(G)$. Then either $y_3y_4\in E(G)$ or $y_3y_1\in E(G)$.  Thus  $H\less y_5$ has a $\ksix$  minor after contracting the edge $x_3y_4$,  a contradiction.   
\end{proof}

Lemma~\ref{l:k4-} follows from the proof of Lemma~16 of J\o rgensen~\cite{Jor01}. We recall the  proof here   for convenience. 

\begin{lem}[J\o rgensen~\cite{Jor01}]\label{l:k4-} Let $G$ be a $4$-connected graph and let $S\subseteq V(G)$  be a separating set of four vertices. Let $G_1$ and $G_2$ be proper subgraphs of G so that $G_1\cup G_2=G$ and $G_1\cap G_2=G[S]$.   Let $d_1$    be the  maximum number of edges that can be added to $G_2$ by contracting edges of $G$ with at least one 
end in $G_1$.  If $|G_1|\ge 6$, then   \[e(G[S]) + d_1 \geq 5.\]
\end{lem}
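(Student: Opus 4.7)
The plan is to find pairwise disjoint connected subgraphs $T_1,T_2,T_3,T_4$ of $G_1$ with $s_i\in T_i$ such that at least five of the six pairs $\{T_i,T_j\}$ carry an edge of $G_1$ between them; contracting each $T_i$ to a single vertex then yields at least five edges among the images of $s_1,\dots,s_4$ in the new $G_2$, which gives $e(G[S])+d_1\ge 5$. The structural starting point is that for every component $C$ of $G_1\setminus S$ we must have $N_G(C)=S$: because $V(G_2)\setminus S\ne\emptyset$, any $N_G(C)\subsetneq S$ would be a separator of $G$ of size at most three, contradicting the $4$-connectivity of $G$. In particular, every vertex of $S$ has a neighbour in every component of $G_1\setminus S$, and the hypothesis $|G_1|\ge 6$ gives $|V(G_1)\setminus S|\ge 2$.

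If $G_1\setminus S$ has two or more components $C$ and $C'$, I would take $T_1:=C\cup\{s_1\}$, $T_2:=C'\cup\{s_2\}$, $T_3:=\{s_3\}$, $T_4:=\{s_4\}$; since $N(C)=N(C')=S$, every pair except possibly $\{T_3,T_4\}$ carries an edge, giving five immediately. If instead $G_1\setminus S$ is a single connected component $C$, let $A_i:=N_G(s_i)\cap C$; each $A_i$ is nonempty. The generic strategy is to partition $V(C)$ into two nonempty connected pieces $A,B$ (by cutting a spanning tree of $C$ at a chosen edge), set $T_1:=A\cup\{s_1\}$, $T_2:=B\cup\{s_2\}$, $T_3:=\{s_3\}$, $T_4:=\{s_4\}$, and note that $\{T_1,T_2\}$ is automatically an edge (since $A\cup B=C$ is connected) while each $\{T_\ell,T_j\}$ for $\ell\in\{1,2\}$, $j\in\{3,4\}$ is an edge whenever the relevant piece meets $A_j$. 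Cutting on an edge lying in both Steiner subtrees of $A_3$ and $A_4$ in a fixed spanning tree of $C$, and permuting the labels of $S$ if necessary, handles the generic case.

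The hard part will be the degenerate sub-cases. When $|C|=2$, say $C=\{v_1,v_2\}$, the $4$-connectivity together with $N(v_i)\subseteq\{v_1,v_2\}\cup S$ forces $|N(v_i)\cap S|\ge 3$, and a short case analysis, combined with a relabelling trick that places any $s_j\notin N(v_i)$ into a branch set anchored at the other vertex of $C$, always produces five edges. When some $A_i$ shrinks to a single cut-vertex of $C$ so that no partition of $V(C)$ works, I would pass to the auxiliary graph $G_1^+:=G_1\cup K[S]$, verify that $G_1^+$ is $4$-connected by lifting any separator of size less than four back to a separator of $G$ of the same size, and then apply Menger's theorem inside $G_1^+$ to produce four internally disjoint paths from a well-chosen vertex of $C$ to the four vertices of $S$; these paths assemble into the required branch sets $T_1,\dots,T_4$.
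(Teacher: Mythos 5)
Your proposal departs from the paper's route and, while the easy half is fine, the hard half has a real gap.

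Where you agree with the paper only in spirit: the paper works entirely with a Menger fan. It fixes $x\in V(G_1)\setminus S$, takes four internally disjoint $(x,S)$-paths $Q_1,\dots,Q_4$ (this is in $G$, not in any augmented $G_1^+$; after each $Q_i$ first meets $S$ it can be truncated, so the paths live in $G_1$), disposes of the case where all $Q_i$ are single edges using $|G_1|\ge 6$, and otherwise uses $4$-connectivity \emph{twice more}: once to find a path $Q$ between $Q_1\setminus\{x,s_1\}$ and some other $Q_j\setminus x$, and once more to find a path $R$ from $(V(Q_1)\cup V(Q_2)\cup V(Q))\setminus\{x,s_1,s_2\}$ to some $Q_\ell\setminus x$ with $\ell\in\{3,4\}$. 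Only after adding $Q$ and $R$ do the branch sets yield five edges. Your two-or-more-components reduction is clean and arguably slicker than the paper in that case, and the $N(C)=S$ observation is correct; this is a genuine simplification for that subcase.

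The gap is in the single-component case. Two problems. First, the Steiner-edge cut is not always available and the labelling is not automatic: the two Steiner subtrees of $A_3$ and $A_4$ in a fixed spanning tree of $C$ may share no edge (e.g.\ when $A_3$ or $A_4$ is a single vertex, or when they span disjoint subtrees), and even when a cut exists you additionally need $s_1$ adjacent to $A$ and $s_2$ adjacent to $B$ for $T_1,T_2$ to be connected, which a permutation of labels does not always achieve. Second, and more importantly, your proposed fallback does not close the argument. Four internally disjoint $(x,S)$-paths give branch sets $T_i\supseteq V(Q_i)\setminus\{x\}$ with $x$ placed in one of them, say $T_4$; this guarantees only the three edges $T_4T_1,T_4T_2,T_4T_3$ in the contracted graph, plus whatever is already in $E(G[S])$. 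If $e(G[S])\le 1$ this is at most four. The additional two uses of $4$-connectivity (the paths $Q$ and $R$ above), which supply the remaining crossing edges between the fan legs, are exactly what is missing from your sketch, and they are the crux of J{\o}rgensen's argument. Without them the proof does not go through.
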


\begin{proof}  
Let $x \in V(G_1) \setminus S$.
Then there exist four pairwise internally vertex-disjoint $(x, S)$-paths, say  $Q_1, \ldots, Q_4$,  in $G_1$.
For each $i \in [4]$, let $s_i$ be the vertex in $V(Q_i) \cap S$.
If all four of these paths have length one, then, since $|G_1| \geq 6$, we may choose a vertex $y \in V(G_1) \setminus (S \cup \{ x \})$.
Then there are at least three internally vertex-disjoint  $(y, S)$-paths in $G_1\setminus \{ x \}$.
Contracting some of these paths results in $S$ having at least five edges, as desired. \medskip

We may now assume that $Q_1$ has length at least two.
Since $\{ x, s_1 \}$ is not a separating set, there is a path $Q$ from a vertex on $Q_1 \setminus \{ x, s_1 \}$ to a vertex on $Q_j \setminus x $ for some $j\in \{ 2, 3, 4 \}$, so that only the ends  of $Q$ belong to $V(Q_1) \cup V(Q_2) \cup V(Q_3) \cup V(Q_4)$.
We may assume that  $j = 2$.
Since $\{ x, s_1, s_2 \}$ does not separate the graph, there is a path $R$ from $V(Q_1\less\{x, s_1\}) \cup V(Q_2\less s_2) \cup V(Q) $ to $Q_\ell\setminus  x$ for some  $\ell\in\{3, 4\}$, so that only the ends of $R$ belong to $V(Q_1) \cup V(Q_2) \cup V(Q_3) \cup V(Q_4) \cup V(Q)$.
The result follows from the existence of these paths.
\end{proof}

We are now ready to prove Theorem~\ref{t:exfun}, which we restate for convenience.\exfun*

 \begin{proof} Suppose the assertion is false. Let $G$ be a graph on   $n \geq 8$ vertices with  $ e(G) \geq 4.5n-12$ and, subject to this, $n$ is minimum. We may assume that $e(G) = \left\lceil 4.5n-12 \right\rceil$.
It is simple to check that $G\se  \kef$ for each  $n \in\{ 8,9\}$. Thus  $n \geq 10$.  We next prove several claims. \medskip

\setcounter{counter}{0}
\noindent {\bf Claim\refstepcounter{counter} \label{c:d5} \arabic{counter}.}  
 $\delta(G) \geq 5$.
 
\begin{proof} Suppose $\delta(G)\le 4$.  Let $x\in V(G)$  with $d(x)\le 4$. Then 
	\[e(G \setminus x)  =e(G)-d(x)\ge \lceil 4.5n - 12 \rceil - 4  >  \lceil 4.5|G \setminus x| - 12 \rceil.\] 
	Thus  $G \setminus x $ has a $ \kef$ minor by the minimality of $G$,   a contradiction.
\end{proof}

\noindent {\bf Claim\refstepcounter{counter}\label{c:triangles} \arabic{counter}.}  
Every edge in $G$ belongs to at least $4$ triangles. Moreover, if $x\in V(G)$ is a $5$-vertex, then $G[N[x]]=K_6$. 
 
\begin{proof}
 Suppose there exists    $e \in E(G)$ such that  $e$ belongs to at most three  triangles. Then
	\[e(G / e) \ge  \lceil 4.5n - 12 \rceil - 4  > \lceil 4.5|G / e| - 12 \rceil.\]
	Thus $G /e \se \kef$ by the minimality of $G$, a contradiction. This  implies that $G[N[x]]=K_6$ for each $5$-vertex $x$  in $G$. 
\end{proof}

\noindent {\bf Claim\refstepcounter{counter}\label{c:55} \arabic{counter}.}  
No two $5$-vertices  in $G$ are adjacent.
 
\begin{proof} 
Suppose  there exist two distinct $5$ vertices, say $x, y$, in $G$ such that $xy \in E(G)$. 
Then $|G \setminus \{ x, y \}|=n-2 \geq 8$     and  \[ e(G \setminus \{ x, y \}) = e(G)-9=    \lceil 4.5(n-2) - 12 \rceil.\] 
 Thus $G \setminus \{ x, y \}$ has a  $\kef$ by the minimality of $G$, a contradiction.  
 \end{proof}

Let $S$ be a minimal separating set of vertices in $G$, and let $G_1$ and $G_2$ be proper subgraphs of $G$ so 
that $G=G_1\cup G_2$ and $G_1\cap G_2=G[S]$. 
For each $i\in[2]$, let $d_i$ be the  maximum number of edges that can be added to $G_{3-i}$ by contracting edges of $G$ with at least one 
end in $G_i$. More precisely, let $d_i$ be the 
largest integer so that $G_i$ contains pairwise disjoint sets of vertices 
$V_1,   \dots, V_p$ so that $G_i[V_j]$ is connected, 
 $|S\cap V_j|=1$ for  $1\le j\le p :=|S|$, and so that the graph obtained from $G_i$ by contracting each of $G_i[V_1],   \dots, G_i[V_p]$ to a single vertex 
and deleting $V(G)\less \bigcup_{j=1}^p V_j$ has 
$e(G[S])+d_i$ edges.   It follows from the minimality of $G$ that for each $i\in[2]$, 
\[ e(G_i) + d_{3-i} < 4.5|G_1| - 12 \,\,  \text{ if } |G_i|\ge8.\tag{$*$}\]    
 
\noindent {\bf Claim\refstepcounter{counter}\label{c:nodis7} \arabic{counter}.}   
If $|G_i|=7$ for some $i\in [2]$, then  $|S|\le 5$,    $G_i$ contains $K_7^-$ or $K_7^=$ as a spanning subgraph, and each  missing edge of $G_i$  lies in  $G[S]$.  
\begin{proof}
Suppose, say,  $|G_1| = 7$.    Let $C$ be a component of $G_2 \setminus S$.  We first prove that  $G_1\less S$ is  connected. Suppose not. By Claims~\ref{c:d5}, \ref{c:triangles} and  \ref{c:55},    $G_1\less S$ must contain two nonadjacent $5$-vertices in $G$ with   $G[S]=K_5$. Thus      $G_1=K_7^-$ and so  $G\se \ket$ by contracting   $C$   to  a single vertex, a contradiction.   Thus    $G_1\less S$ is  connected, and so $G_1\less S$ has least one $6$-vertex, say $x$, in $G$.  
Suppose next $|S|=6$. Then  $G[S]=G[N(x)]$ contains $K_{2,2,2}$ as a spanning subgraph by Claim~\ref{c:triangles}. But then  $G\se \kef$ by contracting   $C$   to a single vertex, a contradiction.  This proves that $|S|\le 5$ and so $|G_1\less S|\ge 2$.  \medskip

 We next show that  no vertex in $G_1\less S$ is a $5$-vertex in $G$. Suppose not.  Let $  y\in V(G_1\less S)$  be a $5$-vertex in $G$.  Then $xy\in E(G)$ and $G_1= K_7^-$ by Claim~\ref{c:triangles}. Note that   $G[S]$ is a complete subgraph because $G[N(y)]=K_5$  and $G[N(x)]$ contains $K_{2,2,2}$ as a subgraph by Claim~\ref{c:triangles}.   Then $|S|\le 3$, else $G\se\kef$ by contracting $C$  to a single vertex. Suppose $|G_2|\ge 8$. Then 
\begin{align*}
e(G_2)&=e(G)-e(G_1)+e(G[S])\\
&=\lceil 4.5n-12\rceil-20+{{|S|}\choose2}\\
&= \left\lceil\big(4.5\times (n-(7-|S|))-12\big) +  4.5\times (7-|S|)\right\rceil -20+{{|S|}\choose2}\\
&\ge \lceil 4.5|G_2|-12\rceil
\end{align*}
because $|S|\le 3$. By the minimality of $G$, we see that $G_2\se\kef$, a contradiction. Thus $|G_2|\le 7$.   If $|G_2|=6$, then $G_2=K_6$ and   $n=7+6-|S|=13-|S|$; but then  \[\lceil 4.5\times (13-|S|)-12\rceil= e(G)=e(G_1)+e(G_2)-e(G[S])= 20 + 15-{{|S|}\choose2},\] 
which is impossible because $|S|\le3$. Thus $|G_2|=7$ and so  $n=7+7-|S|=14-|S|$.  Using a similar argument for $G_1$, we see that $G_2\less S$  is connected and so $G_2=K_7$ or $K_7^-$. Note that $G_2=K_7^-$ when $|S|=3$, else $G\se\kef$ by contracting $G_1\less S$  to a single vertex. But then 
 \[\lceil 4.5\times (14-|S|)-12\rceil= e(G)=e(G_1)+e(G_2)-e(G[S])\le  20 + (21-\max\{0, |S|-2\})-{{|S|}\choose2},\] 
which is impossible because $|S|\le3$.  This proves that no vertex in $G_1\less S$ is a $5$-vertex in $G$. It then follows that each vertex in $G_1\less S$ is a $6$-vertex in $G$. By Claim~\ref{c:triangles} and the fact that $|S|\le 5$, we see  that $G[S]$ is isomorphic to $K_{|S|}$, $K_{|S|}^-$, or $K_{|S|}^=$,  and every vertex in $V(G_1\less S)$ is adjacent to all the other vertices in $G_1$. Thus $G_1$ contains $K_7^-$ or $K_7^=$ as a spanning subgraph such that  all its  missing edges lie in $G[S]$. 
 \end{proof}

\noindent {\bf Claim\refstepcounter{counter}\label{c:no7} \arabic{counter}.}  
Neither $G_1$ nor $G_2$ has exactly seven vertices.
 
\begin{proof}
Suppose not, say $|G_1| = 7$.  By Claim~\ref{c:nodis7},   $|S|\le 5$, $G_1$ contains $K_7^-$ or $K_7^=$ as a spanning subgraph, and each  missing edge of $G_1$ lies in  $G[S]$.  
We next prove that $|G_2|\ge 8$. Suppose $  |G_2|\le 7$. Note that $|G_2|\ge6$ by Claim~\ref{c:d5}. Suppose $|G_2|=6$. Then  $G_2=K_6$ and $G[S]=K_5$. But then $n=|G_1|+|G_2|-| S|=7+6-5<10$, a contradiction. Thus $|G_2|=7$ and  so $n= 14-|S|$.   Since $n\ge10$, we see that $|S|\le4$. 
By Claim~\ref{c:nodis7}, $G_2$ contains $K_7^-$ or $K_7^=$ as a spanning subgraph, and each  missing edge of $G_2$ lies  in  $G[S]$.    Suppose $  |S|=4$. Then $G[S]=K_4^=$, else $G\se \kef$ by contracting $G_2\less S$  to a single vertex.   Let $y \in V(G_2)\less S$ and $z\in S$ such that $z$ is incident with a  missing edge in $G[S]$. It is simple to check that  $G \se \kef$  by contracting the edge $yz$ and $G_2\less (S\cup\{y\})$  to a single vertex, a contradiction.    Thus $|S|\le3$.  As noted in the proof of Claim~\ref{c:nodis7},  we see that $G_1=G_2=K_7^-$ when $|S|=3$.   But then   
  \[\lceil 4.5\times(14-|S|)-12\rceil= e(G)=e(G_1)+e(G_2)-e(G[S])\le 2\times (21-\max\{0, |S|-2\}) -{{|S|}\choose2},\] 
which is impossible because $|S|\le 3$. This proves that $|G_2|\ge 8$. \medskip

 Recall that $G_1$ contains $K_7^-$ or $K_7^=$ as a spanning subgraph, and each  missing edge of $G_1$ lies in  $G[S]$.  It follows that  $e(G[S])+d_1={{|S|}\choose2}$. Suppose $G_1=K_7^=$. Then $4\le |S|\le 5$. But then \begin{align*}
e(G_2)+d_1&=e(G)-e(G_1)+e(G[S])+d_1\\
&=\lceil 4.5n-12\rceil-19+ {{|S|}\choose2} \\
&= \left\lceil\big(4.5\times (n-(7-|S|))-12\big) +  4.5\times (7-|S|)\right\rceil -19+{{|S|}\choose2}\\
&\ge \lceil 4.5|G_2|-12\rceil, 
\end{align*}
 contrary to ($*$) because $4\le |S|\le 5$ and $|G_2|\ge 8$.  
 Thus  $G_1=K_7^-$ or $K_7$. Note that  $|S|\le 3$, else $G\se \kef$ by contracting  a component of $G_2\less S$  to a single vertex.   But then 
\begin{align*}
e(G_2)+d_1&=e(G)-e(G_1)+e(G[S])+d_1\\
&\ge \lceil 4.5n-12\rceil-21+ {{|S|}\choose2} \\
&= \left\lceil\big(4.5\times (n-(7-|S|)\big)-12)+4.5\times (7-|S|)\right\rceil-21+{{|S|}\choose2}\\
&\ge \lceil 4.5|G_2|-12\rceil, 
\end{align*}
 contrary to ($*$) because $|S|\le 3$ and $|G_2|\ge 8$.  This proves Claim~\ref{c:no7}. 
\end{proof}

Observe that, if  $|G_1|\ge 8$ and $|G_2|\ge 8$, then by ($*$), we have 
\begin{align*}
4.5n - 12\le e(G) &= e(G_1) + e(G_2) - e(G[S])  \\
&<(4.5|G_1|-12-d_2)+(4.5|G_2|-12-d_1)- e(G[S]) \\
&=4.5(n + |S|) - 24 - d_1 - d_2 - e(G[S]).
 \end{align*}
It follows that 
\[ 9|S| > 24 + 2\big(d_1 + d_2 + e(G[S])\big) \, \, \text{ if } |G_1|\ge 8\,  \text{ and } |G_2|\ge 8. \tag{$**$} \]\medskip

\noindent {\bf Claim\refstepcounter{counter}\label{c:5conn}  \arabic{counter}.}  
$G$ is $5$-connected.
 
\begin{proof} Suppose $G$ is not 5-connected. Let $S$ be a minimal separating set of $G$, and  $G_1, G_2,   d_1, d_2$ be defined as above. 
Then $|G_1|\ne 6$, else by Claim~\ref{c:triangles},   we have $G_1=K_6$ and so $G[S]=K_5$, a contradiction. Similarly, $|G_2|\ne 6$. By Claim~\ref{c:no7}, $|G_1|\ge 8$ and $|G_2|\ge 8$. By ($**)$, 
  $|S| \geq 3$.
If $|S| = 3$, then either $e(G[S]) \geq 2$ or $\min\{ d_1, d_2 \} \geq 1$; in either case, $d_1 + d_2 + e(G[S]) \geq 2$, contrary to $(**)$.
Therefore, $|S| =4$ and $G$ is $4$-connected. By Lemma~\ref{l:k4-}, $e(G[S])+d_1\ge5$. Note that $d_2\ge 1$ when $S$ is not a $4$-clique, and $e(G[S])=6$ when  $S$ is  a $4$-clique. In either case, we have $d_1 + d_2 + e(G[S])\ge6$, contrary to $(**)$. \end{proof}

\noindent {\bf Claim\refstepcounter{counter}\label{c:almostclique}   \arabic{counter}.}  
  If there exists $x \in S$ such that $S \setminus x$ is a clique, then $G[S] = K_5$.
 
\begin{proof} Suppose  $S \less x$ is  a clique but $G[S]\ne K_5$.   Let $G_1$ and $G_2$ be as above. Then $|G_1|\ne 6$, else   $G_1=K_6$ and $G[S]=K_5$. Similarly, $|G_2|\ne 6$. By Claim~\ref{c:no7}, $|G_1|\ge 8$ and $|G_2|\ge 8$.  By Claim~\ref{c:5conn}, $|S| \geq 5$. 
If $S$ contains a $6$-clique, then $G\se K_8^-$  by contracting a component of $G_1 \setminus S$  and  a component of $G_2 \setminus S$  to two    distinct  vertices, a contradiction. Thus    $5\le |S|\le 6$  and $S$ is not a clique.
Then  $\delta(G[S]) =d_{G[S]}(x)\leq |S| - 2$.
Since  $S\less x$ is a clique, we see that \[ d_1 = d_2 = |S| - 1 - d_{G[S]}(x)=|S| - 1 -\delta(G[S]). \]
It follows that  \[ e(G[S]) ={{|S|-1}\choose 2}+d_{G[S]}(x)= {{|S|-1}\choose 2}+\delta(G[S]). \] This, together with $(**)$, implies that 
	\begin{align*}
	9|S| &> 24 + 2(d_1 + d_2) + 2e(G[S]) \\
	&= 24 + 4|S| - 4 - 4\delta(G[S]) + (|S|-1)(|S|-2) + 2\delta(G[S]) \\
	&= 20+4|S|+|S|^2 -3|S| + 2 - 2\delta(G[S]) \\
	&\ge 20+ |S|^2 +|S| + 2 -2(|S|-2)\\
	&=|S|^2 -|S|+26, 
	\end{align*}
which is impossible because $5\le |S|\le 6$.
\end{proof}

\noindent {\bf Claim\refstepcounter{counter}\label{c:dnot6}   \arabic{counter}.}  
No vertex in $G$ is a $6$-vertex.
 
\begin{proof}
Suppose to the contrary that  $G$ has a $6$-vertex, say $x$.
By Claim~\ref{c:triangles}, $G[N(x)]$  contains  $K_{2,2,2}$ as a spanning subgraph.  Let $C$ be a component of $G \setminus N[x]$. By Claim~\ref{c:5conn}, $|N(C)|\ge5$. 
If  $N(C)=N(x)$ or $G[N(x)]$ contains  $K_6^=$ as a spanning subgraph, then $G\se\kef$ by contracting $C$  to a single vertex.   Thus $|N(C)\cap  N(x)|=5$  for every component $C$ of $G \setminus N[x]$ and $G[N(x)]=K_{2,2,2}$.  By Claim~\ref{c:triangles}, no vertex in $N(x)$ is a $5$-vertex in $G$. Thus $G \setminus N[x]$ is disconnected. Let $C'\ne C$ be another component of $G \setminus N[x]$. 
By Claim~\ref{c:almostclique}, $G[N(C)]=K_5^=$ and $G[N(C')]=K_5^=$.
Let $y\in N(x) \cap  N(C)  $ such that $y$ is incident with a missing edge of $G[N(C)]$. Then $G\se\kef$ by contracting $C$ onto $y$ and $C'$  to a single vertex, a  contradiction. 
\end{proof}

\noindent {\bf Claim\refstepcounter{counter}\label{c:d8}   \arabic{counter}.}  
No vertex in $G$ is a $7$-vertex.
 
\begin{proof}
Suppose $G$ has a $7$-vertex, say $x$.
  By Claim~\ref{c:triangles}, $\delta(G[N(x)])\ge 4$,   and so  $\overline{G}[N(x)]$ is the disjoint union of paths and cycles.   By Claim~\ref{c:d8}, no vertex in $N(x)$ is a $6$-vertex in $G$. 
 Suppose  there exists  $y\in N(x)$ such that $y$ is a $5$-vertex in $G$. Then $N[y]\subseteq N[x]$ and  $G[N[y]]=K_6$. Thus $G[N[x]]\se\kef$ because $\delta(G[N(x)])\ge 4$, a contradiction.  This proves that no  vertex in $N(x)$   is a $5$-vertex or $6$-vertex  in $G$.  Let $\mathcal{F}:=\{C_7, C_6  \cup K_1, C_5  \cup K_2,  C_4  \cup K_3\}$. Then $\overline{G}[N(x)]$ is a spanning subgraph of  some member in  $\mathcal{F}$ because   $\overline{G}[N(x)]$ is the disjoint union of paths and cycles. 
 We only consider the worst scenario that $\overline{G}[N(x)]$ is isomorphic to some graph in $\mathcal{F}$. It is straightforward to check that every graph in $\mathcal{F}$ has a $\mathcal{K}_6^{-3}$ minor.  Let $C$ be a component of $G \setminus N[x]$. By Claim~\ref{c:5conn}, $|N(C)|\ge5$. If $N(C)=N(x)$, then $G\se \kef$ by contracting $C$  to a single vertex. Thus $N(C)\ne N(x)$ for every  component $C$ of $G \setminus N[x]$.   We claim that $G\less N[x]$ is disconnected. Suppose not. Then $C=G\less N[x]$ and $\overline{G}[N(x)]=K_1+C_6$. Note that every vertex on $C_6$ belongs to $N(C)$. Thus   $G\se\kef$ by contracting   $G\less N[x]$ onto a vertex of $C_6$, a contradiction.  This proves that   $G\less N[x]$ is disconnected, as claimed.  Let $C$ and $C'$ be two distinct components of  $G\less N[x]$ such that $N(C)$ contains the most number of missing edges of $G[N(x)]$.  It is straightforward to check that  either $G[N(C)]$ must contain two adjacent missing edges, say $x_1x_2, x_1x_3$, where   $x_1, x_2, x_3\in N(x)$ are distinct, or    $\overline{G}[N(x)]\in\{K_1\cup C_6, K_2\cup C_5\}$ and $G[N(C)]=G[N(C')]=K_5^=$. In the former case, since $|N(C')|\ge5$, by Claim~\ref{c:almostclique}, we see that $G[N(C')\less x_1]$ must contain a missing edge, say $e$.   By contracting $C$ onto $x_1$ and $C'$ onto an end of the edge $e$, it is straightforward to check that $G[N[x]]+\{x_1x_2, x_1x_3,  e\}\se\kef$, a contradiction. In the latter case, let $y_1y_2$ be a missing edge in $G[N(C)]$ and $y_3y_4$ be a missing edge in $G[N(C')]$ such that  $y_1y_2, y_3y_4$ are two different missing edges in $G[N[x]]$. We may assume that $y_1\ne y_3$. Then $G[N[x]]+\{y_1y_2, y_3y_4\}\se\kef$ by contracting $C$ onto $y_1$  and  $C'$ onto $y_3$, a contradiction.  \end{proof}

\noindent {\bf Claim\refstepcounter{counter}\label{c:k72}   \arabic{counter}.}  
  $G$ is  $\mathcal{K}_7^{-2}$-free.
 
\begin{proof}
Suppose $G$ has a subgraph $H$ such that $H\in \mathcal{K}_7^{-2}$.  Since $G$ is $5$-connected, 
 we obtain a $\kef$ minor in $G$ by contracting a component of $G \setminus V(H)$  to a single vertex, a contradiction. \end{proof}

\noindent {\bf Claim\refstepcounter{counter}\label{c:onlyone5}   \arabic{counter}.}  
$G$ has at most  one $5$-vertex.
 
\begin{proof}
Suppose to the contrary that $G$ has two distinct $5$-vertices, say $x$ and $y$.
By Claims~\ref{c:triangles} and \ref{c:55}, $xy \notin E(G)$ and $N[x]$ and $N[y]$ are  $6$-cliques.
Suppose $N(x) \neq N(y)$.
By Claim~\ref{c:5conn} and Menger's Theorem, there exist five pairwise internally vertex-disjoint $(x, y)$-paths $Q_1, \dotsc, Q_5$. We may assume that $Q_1$ has  at least four vertices  with  $ V(Q_1)\cap N(x)=\{x_1\}$ and $ V(Q_1)\cap N(y)=\{y_1\}$.  Let $Q_1^*$ be the $(x_1, y_1)$-subpath of $Q_1$. Then $G\se\kef$ by  contracting all the edges of $Q_1^*\less y_1$, $Q_2\less\{x, y\}, \ldots, Q_5\less\{x, y\}$,   a contradiction.   Thus $N(x) = N(y)$. Then $G[N[x] \cup N[y]]=K_7^-$, contrary to Claim~\ref{c:k72}.  
\end{proof}

\noindent {\bf Claim\refstepcounter{counter}\label{c:58}   \arabic{counter}.}  
No $8$-vertex   is adjacent to a $5$-vertex  in $G$.
 
\begin{proof}
Suppose to the contrary that there exists $xy \in E(G)$ such that $d(x) = 8$ and $d(y) = 5$.
By Claim~\ref{c:triangles},  $G[N[y]]=K_6$,    $N[y]\subseteq N[x]$ and 
$\delta(G[(N(x)])\ge4$. Let $A:=N(x)\less N[y]$. Then $|A|=3$ because $xy\in E(G)$.  Let  $A:=\{a_1, a_2, a_3\}$.  Suppose $G[A]\ne K_3$, say $a_1a_2 \notin E(G)$. Then either $e(\{a_1, a_2\}, N(x)\less A)\ge 7$, or $e(\{a_1, a_2\}, N(x)\less A)=6$ and $a_3$ is complete to $\{a_1, a_2\}$, because $\delta(G[(N(x)])\ge4$. Then $G[N[x]\less a_3]\se\kef$ in the former case, and $G[N[x] ]/a_1a_3\se\kef$  in the latter case, a contradiction.  This proves that $G[A]=K_3$. Then $e(a_i, N(x)\less A)\ge 2$ for each $i\in[3]$. If $e(a_j, N(x)\less A)\ge 3$ for some   $j\in[3]$, then $G[N[y]\cup\{a_j\}]$ is not $\mathcal{K}_7^{-2}$-free, contrary to Claim~\ref{c:k72}. Thus $e(a_i, N(x)\less A)= 2$ for each $i\in[3]$.  Next if  there exists $z\in N(x)\cap N(y)$ such that $z$ has exactly one neighbor, say $a_1$, in $A$, then $G[N[x]]/za_1\se\kef$, a contradiction. Since $e(A, N(x)\less A)= 6$ and $|N(x)\cap N(y)|=4$, it follows that there exists $z^*\in N(x)\cap N(y)$ such that $z^*$  is anticomplete to $A$ in $G$. By Claim~\ref{c:onlyone5}, $z^*$ is not a $5$-vertex in $G$. Let $C$ be a component of  $G\less N[x]$ such that $z^*\in N(C)$.  Note that $y\notin N(C)$ and $N(x)\cap N(y)$ is a $4$-clique. By Claim~\ref{c:almostclique}, $N(C)$ must contain at least two vertices, say $a_1, a_2$, in $A$.     By contracting $C$ onto $z^*$, we see that  $G[N[x]\less a_3]+\{z^*a_1, z^*a_2\}\se\kef$, a contradiction. 
\end{proof}

\noindent {\bf Claim\refstepcounter{counter}\label{c:disconnected}   \arabic{counter}.}   Let $x\in V(G)$ be an $8$-vertex in $G$, and let $M$ be  the set of vertices of $N(x)$ not adjacent to all other vertices of $N(x)$.  
Then there is no component $K$ of $G \setminus N[x]$    such that $M\subseteq N(K)$.
In particular, $G \setminus N[x]$ is  disconnected.

\begin{proof}
Suppose such a component $K$ exists. Then every vertex in $M$ has a neighbor in $K$ because $M\subseteq N(K)$. 
 By Lemma~\ref{l:computer}, there exists  $y \in N(x)$ such that $G[N(x)] \setminus y$ has a $\ksix$ minor.
If $N[y] = N[x]$, then $G[N[x]]\se\kef$, a contradiction.
Thus $y\in M\subseteq N(K)$. 
By contracting $K$ onto $y$, we  obtain  a $\kef$ minor in $G$, a contradiction. This proves that no  such  component $K$ exists.  By Claims~\ref{c:dnot6}, \ref{c:d8} and \ref{c:58}, every vertex in $M$ has degree at least eight, and thus  every vertex in $M$  has a neighbor in  $G \setminus N[x]$. It follows that $G \setminus N[x]$ is disconnected. 
\end{proof}

\noindent {\bf Claim\refstepcounter{counter}\label{c:comporder1}   \arabic{counter}.}  
Let $x\in V(G)$ be an $8$-vertex in $G$. 
Then $G \setminus N[x]$ has at most one component $C$ with $|C|=1$.
Moreover, if $C$ is a component of $G \setminus N[x]$ such that $|C| = 1$, then the sole vertex in $C$  is a $5$-vertex in $G$.

\begin{proof} Let $C$ be a component of  $G \setminus N[x]$   with $|C|=1$.   Let $y$ be the only vertex in $C$.  Suppose $y$ is not a $5$-vertex in $G$. Then $d(y)\ge 8$  by Claims~\ref{c:dnot6} and  \ref{c:d8}, and so   $N(C)=N(x)$, contrary to Claim~\ref{c:disconnected}.  Thus   $y$ is  a $5$-vertex in $G$. By Claim~\ref{c:onlyone5}, $G \setminus N[x]$ has at most one such component $C$.
\end{proof}

\noindent {\bf Claim\refstepcounter{counter}\label{c:comp9}    \arabic{counter}.}  
 Let $x\in V(G)$ be an $8$-vertex in $G$. 
Then there is no component $C$ of $G \setminus N[x]$ such that $|C| \geq 2$ and  for every vertex $y\in V(C)$,   either   $d_G(y) =5$ or    $d_G(y) \geq 9$.

\begin{proof}
Suppose such a component $C$ exists.
By Claim~\ref{c:onlyone5}, $C$ has at most one $5$-vertex in $G$. Let $y\in   V(C)$ such that $d_G(y)\ge 9$. Then $|V(C) \cup N(C)| \geq 10$. By Claim~\ref{c:disconnected}, $N(C)\ne N(x)$. Thus $|C|\ge |N[y]\less N(C)|\ge 10-7=3$. 
Let $G_1 := G \setminus C$ and $G_2 := G[C \cup N(C)]$. Note that  $|G_2|\ge10$ and $N(C)$ is a minimal separating set of $G$.  
Let  $d_1$   be defined as in the paragraph prior to Claim~\ref{c:nodis7}. 
Let $z \in N(C)$  such that  $d_{G[N(C)]}(z) = \delta(G[N(C)])$. Let $d:=d_{G[N(C)]}(z)$. 
By contracting   $G_1 \setminus N(C)$ onto $z$, we see that   $d_1 \geq |N(C)| - d - 1$. By ($*$), 
  \[ e(G_2) <4.5(|C| + |N(C)|) - 12 - (|N(C)| - d - 1)=4.5|C| + 3.5|N(C)| + d - 11. \tag{a} \]
Now let $t = e_G ( C, N(C) )$.
Then $e(G_2) = e(C) + t + e(G[N(C)])$.
Note that   $2e(C) \geq 9(|C| - 1) + 5 - t$ and $2e(G[N(C)]) \geq d|N(C)|$, so we have \[ 2e(G_2) =2e(C) +2 t + 2e(G[N(C)])\geq 9|C| - 4 + t + d|N(C)|. \tag{b} \]
Combining  (a) and (b) yields  \[ 9|C| + 7|N(C)| + 2d - 22 > 2e(G_2) \geq 9|C| - 4 + t + d|N(C)| \] and so \[ -t > d\big(|N(C)| - 2\big) - 7|N(C)| + 18.\tag{c} \]
Note  that $\delta(G[N(x)]) \geq 4$ by Claim~\ref{c:triangles}, and $N(C)$ is a subset of $N(x)$, so \[ d = \delta(G[N(C)]) \geq 4 - (8 - |N(C)|) = |N(C)| - 4. \]
This, together with (c),  implies that 
	\begin{align*}
	-t &> \big(|N(C)| - 4\big)\big(|N(C)| - 2\big) - 7|N(C)| + 18 \\
	&= |N(C)|^2 - 13|N(C)| + 26 \\
	&= \left( |N(C)|- \frac{13}{2} \right)^2 - \frac{65}{4},
	\end{align*}
so $-t \geq -16$.
But then  \[ |C|(|C|-1)\ge 2e(C) \geq 9(|C|-1) +5 - t \geq 9|C| - 20,\]  
 which implies that $|C| \geq 8$ because $|C|\ge3$.  Thus  $G\se C\se \kef$, contrary to the choice of $G$. 
\end{proof}

To complete the proof, since $e(G)=\lceil 4.5n-12\rceil$, we have $\delta(G)\le8$. By Claims~\ref{c:55}, \ref{c:dnot6} and \ref{c:d8}, let $x$ be an $8$-vertex  in $G$. By Claim~\ref{c:comporder1},  let $C$ be a component of $G \setminus N_G[x]$ with $|C| \geq 2$ and, subject to that, $x$ and $C$ are chosen so that $|C|$ is minimized.
By Claims~\ref{c:onlyone5} and~\ref{c:comp9}, $C$  contains an $8$-vertex, say  $y$, in $G$.  Note that $N_G(x)\ne N_G(y)$ because $|C|\ge2$.  Let $K$ be the component of $G \setminus N_G[y]$ containing $x$.  Then $|K|\ge 2$ because $N_G(x)\ne N_G(y)$.  Note that $N_G(x)\cap N_G(y)\subseteq N(K)$, and every vertex in $N_G(x)\less  N_G(y)$ belongs to $K$. Let  $M$ be  the set of vertices of $N_G(y)$ not adjacent to all other vertices of $N_G(y)$.  
We claim that $M\subseteq N(K)$.
Suppose not. Let $z \in M \setminus N(K)$.
Then  $z \notin N_G(x)$, else $z\in N(K)$  because $x\in V(K)$. It follows that  $z \in V(C)$. 
 By Claim~\ref{c:58}, we   have $d_G(z) \geq 8$; in addition, $N_G(z)$ has no $5$-vertex in $G$ if $d_G(z) = 8$, and at least one vertex in $N_G(z)\less N_G(y)$ is not a $5$-vertex in $G$ if $d_G(z) \geq 9$, due to Claim~\ref{c:onlyone5}. In either case,    let $z'$ be a neighbor of $z$ in $G \setminus N_G[y]$ such that $d_G(z')\ge 8$.
Note that $z' \in \big(N_G(x)\less N_G(y)\big) \cup V(C)$.   
Suppose $z' \notin V(K)$. Then $z'\in V(C)$ because every vertex in $N_G(x)\less  N_G(y)$ belongs to $K$. Let  $C'$ be the component of $G \setminus N_G[y]$ that contains $z'$. Then  $|C'|\ge2$ by Claim~\ref{c:comporder1}, and $C'$ is a proper subset of $C$, contrary to our choice of $x$ and $C$. This proves that  $z' \in V(K)$, and so  $z \in N(K)$, contrary to the choice of $z$.
Thus $K$ is a component of $G \setminus N_G[y]$ such that $M \subseteq N(K)$, contrary to Claim~\ref{c:disconnected}.   \medskip

This completes the proof of Theorem~\ref{t:exfun}.
\end{proof}

\end{document}